\theoremstyle{thmstyleone}%
\newtheorem{theorem}{Theorem}%  meant for continuous numbers
\newtheorem{proposition}[theorem]{Proposition}% 
\theoremstyle{thmstyletwo}%
\newtheorem{example}{Example}%
\newtheorem{corollary}{Corollary}
\theoremstyle{thmstylethree}%
\newtheorem{definition}{Definition}%
\begin{document}

\title[Hyperbolic Spinor Representations of Non-Null Framed Curves]{Hyperbolic Spinor Representations of Non-Null Framed Curves}

%%=============================================================%%
%% Prefix	-> \pfx{Dr}
%% GivenName	-> \fnm{Joergen W.}
%% Particle	-> \spfx{van der} -> surname prefix
%% FamilyName	-> \sur{Ploeg}
%% Suffix	-> \sfx{IV}
%% NatureName	-> \tanm{Poet Laureate} -> Title after name
%% Degrees	-> \dgr{MSc, PhD}
%% \author*[1,2]{\pfx{Dr} \fnm{Joergen W.} \spfx{van der} \sur{Ploeg} \sfx{IV} \tanm{Poet Laureate} 
%%                 \dgr{MSc, PhD}}\email{iauthor@gmail.com}
%%=============================================================%%

\author*[1]{\fnm{Zehra} \sur{ \.I\c{s}bilir}}\email{zehraisbilir@duzce.edu.tr}
\equalcont{These authors contributed equally to this work.}

\author[2]{\fnm{Bahar} \sur{Do\u{g}an Yaz{\i}c{\i}}}\email{bahar.dogan@bilecik.edu.tr}
\equalcont{These authors contributed equally to this work.}

\author[3]{\fnm{Mehmet} \sur{G\"uner}}\email{mguner@sakarya.edu.tr}
\equalcont{These authors contributed equally to this work.}

\affil*[1]{\orgdiv{Department of Mathematics}, \orgname{D\"uzce University}, \orgaddress{\city{D\"uzce}, \postcode{81620}, \country{T{\"u}rkiye}}}

\affil[2]{\orgdiv{Department of Mathematics}, \orgname{Bilecik \c{S}eyh Edebali University}, \orgaddress{\city{Bilecik}, \postcode{11100}, \country{T{\"u}rkiye}}}

\affil[3]{\orgdiv{Department of Mathematics}, \orgname{Sakarya University}, \orgaddress{\city{ Sakarya}, \postcode{54187}, \country{T{\"u}rkiye}}}

%%==================================%%
%% sample for unstructured abstract %%
%%==================================%%

\abstract{In this paper, we intend to bring together the hyperbolic spinors, which are useful frameworks from mathematics to physics, and both spacelike and timelike framed curves in Minkowski 3-space $\mathbb{R}_1^3$, which are new type attractive frames and very crucial issue for singularity theory especially. First, we obtain new adapted frames which are called adapted frames for non-null (spacelike and timelike) framed curves in $\mathbb{R}_1^3$. Then, we investigate the hyperbolic spinor representations of non-null framed curves of the general and adapted frames. Also, we find some geometric results and interpretations with respect to them, and we obtain illustrative and numerical examples with figures in order to support the given theorems and results.}

\keywords{Hyperbolic spinors, Spacelike framed curves, Timelike framed curves, Adapted framed frames}

%%\pacs[JEL Classification]{D8, H51}

\pacs[MSC Classification]{15A66, 53A04, 58K05}

\maketitle

\section{Introduction}
Despite its long history, the theory of curves is one of the most popular, classical, and fundamental subjects of differential geometry, and attracts a great deal of researchers. At the beginning of the discovery of the moving frame, which is called as Frenet-Serret frame, which is found by two distinct researchers Frenet \cite{frenet} and Serret \cite{serret}, was a milestone for differential geometry and  researchers interested in them. A lot of researchers studied the Frenet-Serret frame with respect to several different topics and ongoing. Frenet-Serret frame, which is a fundamental studying area of differential geometry and has been a long time since its investigation, has been an interesting subject, and also has attracted much attention from several researchers.
This frame is convenient to examine, evaluate and interpret the regular curves for lots of geometric properties. In the existing literature, several researchers also found and examined different new types moving frames such as Bishop (type-1, type-2, type-3) \cite{Bishop,Shaker,Syilmaz}, Darboux frame \cite{darboux} and etc.
It is clear that the Frenet-Serret frame is obtained for only regular curves with the condition non-zero curvature. If curves have singular points, then we can not construct the Frenet-Serret frame of them. Afterward, the concept of the framed curve and framed base curve in order to construct the Frenet-Serret frame for non-regular curves are needed \cite{framed}. Framed curves, which were investigated and defined by Honda and Takahashi, are smooth curves with a moving frame that may have singular points \cite{framed}. This new frame also has attracted the notice of several researchers because of the fact that it provides the occasion to obtain a moving frame on curves with singular points. This new interesting frame has contributed to the literature and attracted also plenty of researchers. Framed curves substantially contribute to singularity theory in the existing literature, and the studies interested in this issue are ongoing.

Moreover, Frenet-type framed curves are a special type of framed curves. In order to construct the Frenet frame, if tangent vectors of a non-regular curve vanish at singular points, then a regular spherical curve, which is named as Frenet-type framed base curve, is considered \cite{rectifying,yildiz}. Recently, lots of researchers attached to this topic since this frame have lots of occasion for establishing a moving frame on curves with singular points. In the literature, lots of studies have been done and are ongoing despite of the fact that it has not been a long time since it was discovered. 
The existence conditions of framed curves \cite{existence}, the evolutes of framed curves \cite{evolute}, framed rectifying curves, and framed helices \cite{wang,rectifying,yazici} can be examined. In addition to these, trajectory ruled surfaces related to the framed base curves are scrutinized \cite{yildiz}. These studies studied in the Euclidean space. In addition to these, framed curves also have been studied in the Minkowski $3$-space as spacelike by Li and Pei who introduced the spacelike framed curves in \cite{liandpei}. Also, \"Ozyurt has obtained the special singular curves and properties of them in Minkowski $3$-space in \cite{Cansu}. 

On the other hand, the concept of the spinor is also basic and attractive, and many researchers especially mathematicians and
physicists. P. Ehrenfest is the first person who has used the concept of spinor \cite{Vaz}. Cartan said that spinors satisfy a linear representation of the groups of rotations of a space of any dimension, and also spinors and geometry are very close topics in addition to their relationship with physics \cite{Hladik}. \'E. Cartan who is the first mathematician and studied spinors in a geometrical sense \cite{Cartan}. Since this study includes quite important notions and notations related to the geometry of spinor representation, it is an important study. According to this fundamental study, in the vector space $\mathbb{C}^3$, the set of isotropic vectors establishes a two-dimensional surface in the space $\mathbb{C}^2$.
Conversely, these vectors in $\mathbb{C}^2$ correspond to the same isotropic vectors. Cartan asserted that these vectors are complex as two-dimensional in the space $\mathbb{C}^2$ and he said the spinors including of two complex components related to the vectors in Euclidean 3-space \cite{erisir2,erisir4,Cartan}.
Additionally, Vivarelli is examined the spinors related to the spinors in a geometric sense \cite{Vivarelli}. Some relations between the spinors and quaternions are determined by Vivarelli, and provided the spinor representation of rotations in $E^3$ with the help of the relations between the rotations in $E^3$ and quaternions \cite{Vivarelli,erisir4}.
To get for more detailed information about the spinors, we can refer to the studies \cite{Castillobook,Hladik,Brauer,Cartan,delcastillo,Lounesto,Lounesto2,Vaz, Vivarelli}. Pauli matrices, which have fundamental importance for spinors, were first studied by W. Pauli \cite{paulii}. In the existing literature, lots of studies have been done with respect to spinors.

The study of del Castillo and Barrales \cite{delcastillo} is a milestone for researchers and literature in order to combine the moving frames and spinors. In \cite{delcastillo}, authors study the Frenet-Serret frames and spinors, and they find the spinor representation of Frenet-Serret frame and vectors of this frame. Inspired by this study, lots of researchers also studied the spinors, and different frames and curves. Spinor representations according to the Bishop frame \cite{unal}, Darboux frame \cite{kisi} and Sabban frame \cite{senyurt} are examined. Also, spinor equations are scrutinized related to the different types curves such as successor curves, involute-evolute curves, and Bertrand curves \cite{erisiryeni, erisir1, erisir2}. In addition to these, spinors also are studied with hyperbolic components in Minkowski space  which are called as the hyperbolic spinors \cite{ketenci1,antonuccio}. By using the hyperbolic spinors and frames in the Minkowski space, several studies have been done such as; hyperbolic spinor representations of alternative frame \cite{erisir3}, Frenet frame \cite{ketenci}, Darboux frame \cite{Balci} and Q-frame  \cite{dogan2}. On the other hand, hyperbolic spinors aree examined related to split quaternions for transformations in $\mathbb{R}_1^3$ \cite{tarakcioglu} and Fibonacci spinors are examined \cite{erisir4}. In addition to these, spinor representation of framed Mannheim curves and framed Bertrand curves in Euclidean 3-space are introduced by Yaz{\i}c{\i} et al. \cite{bahar} and \.I\c{s}bilir et al. \cite{zehra}, respectively. We want to also the references with respect to spinors by Aerts et al. \cite{aerts}. Moreover, Eren and Eri\c{s}ir \cite{erenanderisir}, and \"Unal and Yenit\"urk \cite{doganandyeniturk} investigate the spinor representations of $Q$-frame.

We started this study with the question “What are the results when non-null framed curves in Minkowski 3-space and hyperbolic spinors are combined?” and the results made us wonder. This study contains indeed both hyperbolic spinor representations of regular and singular non-null curves in Minkowski 3-space. Since spinors, spinor representations and framed curves are many important topics, in future studies several studies will be completed. Non-null framed curves are a very important framework for singularity theory and recently, lots of studies have been done and are ongoing. We strongly believe that this study will shed light on future studies with respect to the hyperbolic spinor representations and non-null framed curves.

This present study has the following structure as follows. In Section \ref{preliminaries}, we remind some definitions, notations and notions with respect to both hyperbolic spinors and non-null framed curves. Then, in Section \ref{original1}, we construct new adapted frames for spacelike and timelike framed curves in $\mathbb{R}_1^3$. In Section \ref{original2}, we introduce the hyperbolic spinor representations of non-null framed curves in $\mathbb{R}_1^3$ with the help of both the studies in the existing literature and the newly produced adapted frames in this study. Also, we give spinor equations of vectors of non-null framed curves and components of them. Then, we construct two illustrative examples in order to supply the given theorems and results with figures in Section \ref{examples}. Then, we give the conclusions in Section \ref{conclusions}.

\section{Preliminaries}\label{preliminaries}
In this section, we briefly remind some basic notions, notations, and required information with respect to hyperbolic numbers, hyperbolic spinors, notions of Minkowski 3-space $\mathbb{R}^3_1$, and spacelike and timelike framed curves in $\mathbb{R}^3_1$. 

Let the hyperbolic number set is denoted by $\mathbb H$ and any hyperbolic number is written as follows $h=h_1+jh_2\in \mathbb{H}$ where $h_1,h_2\in\mathbb{R}$ and $j^2=1, j\ne\pm1$. Then, the Euler formula for the hyperbolic rotation is given as \cite{sobczyk,erisir3}:
\begin{eqnarray}
e^{j\Theta}=\cosh{\Theta}+j\sinh{\Theta}
\end{eqnarray}
where $\Theta$ is the hyperbolic angle.
For more detailed information related to the hyperbolic number plane can be found in \cite{sobczyk}.

In ${\mathbb{R}_1^3}$, the Lorentzian inner product of any two vectors $\boldsymbol{\omega}=({{\omega}_{1}},{{\omega}_{2}},{{\omega}_{3}})\in{\mathbb{R}_1^3}$ and $\boldsymbol{\varsigma}=({{\varsigma}_{1}},{{\varsigma}_{2}},{{\varsigma}_{3}})\in {\mathbb{R}_1^3}$ is expressed as
$\left\langle \boldsymbol{\omega},\boldsymbol{\varsigma} \right\rangle ={-{\omega}_{1}}{{\varsigma}_{1}}+{{\omega}_{2}}{{\varsigma}_{2}}+{\omega_{3}}{{\varsigma}_{3}}$. If the non-zero vector $\boldsymbol{\omega}$ satisfies $\langle \boldsymbol{\omega},\boldsymbol{\omega}\rangle>0$, $\langle \boldsymbol{\omega},\boldsymbol{\omega}\rangle<0$ and $\langle \boldsymbol{\omega},\boldsymbol{\omega}\rangle=0$, then it is called as a spacelike, timelike or null (lightlike) vector, respectively. In addition to these, the vector product of $\boldsymbol{\omega}$ and $\boldsymbol{\varsigma}$ is given as in $\mathbb{R}_1^3$:
\begin{equation*}
\begin{split}
   \boldsymbol{\omega}\times\boldsymbol{\varsigma}=
\begin{vmatrix}
-\boldsymbol{e}_1&\boldsymbol{e}_2&\boldsymbol{e}_3\\
\omega_1&\omega_2&\omega_3\\
\varsigma_1&\varsigma_2&\varsigma_3
\end{vmatrix}
\end{split}
\end{equation*}
where ${\boldsymbol{e}_n}$ for $n=1,2,3$ are canonical basis of $\mathbb{R}_1^3$ \cite{liandpei}.
Then, the norm in the Minkowski 3-space is defined as follows related to the inner product in Minkowski 3-space: \linebreak $||\boldsymbol{\omega}||=\sqrt{|\langle\boldsymbol\omega,\boldsymbol\omega\rangle|}$ \cite{oneil}.

In addition to these, de Sitter 2-space is determined as \cite{liandpei, Cansu}:
\begin{equation*}
    \mathbb{S}_1^2=\{\boldsymbol{\omega} \in\mathbb{R}_1^3 | \langle\boldsymbol{\omega},\boldsymbol{\omega}    \rangle=1   \}
\end{equation*}
and the hyperbolic 2-space are defined as follows \cite{liandpei, Cansu}:
\begin{equation*}
    \mathbb{H}_0^2=\{\boldsymbol{\omega} \in\mathbb{R}_1^3 | \langle\boldsymbol{\omega},\boldsymbol{\omega}    \rangle=-1   \}.
\end{equation*}

\subsection{Hyperbolic spinors}

Assume that $\flat$ be an $n\times n$ matrix which is defined on the hyperbolic number system $\mathbb{H}$. $\flat^\dagger$ determined as transposing and conjugating of $\flat$, namely $\flat^\dagger=\overline{\flat^t}$, which is an $n\times n$ matrix. If $\flat$ is a Hermitian matrix related to $\mathbb{H}$, then the equation $\flat^t=\flat$ is satisfied. Additionally, if $\flat$ is an anti-Hermitian matrix related to $\mathbb{H}$, then $\flat^t=-\flat$. Let $\flat$ be a Hermitian matrix, the statement $UU^\dagger=U^\dagger U=1$ is valid for $U=e^{j\flat}$. The set of all $n\times n$ type matrices on $\mathbb{H}$ which is satisfied the previous statement construct a group, which is named as hyperbolic unitary group and denoted by $U(n,\mathbb{H})$. Provided that $\det U=1$, then this type group is represented by $SU(n,\mathbb{H})$ \cite{antonuccio,Balci,erisir3}.

The group of all Lorentz transformations in the Minkowski space is named as Lorentz group. The Lorentz group is a subgroup of the Poincar\'e group. Additionally, Poincar\'e group is determined as the group of all isometries in the Minkowski space. The term “orthochronous” is a Lorentz transformation that is kept
in the direction of time. Moreover, the orthochronous Lorentz group is determined as that rigid transformation of Minkowski 3-space which kept both the direction of time and  orientation. Provided that they have the determinant $+1$, then
this subgroup is represented as $SO(1,3)$ \cite{carmeli,Balci,erisir3,ketenci}.

In addition to these, there is a homomorphism between the group $SO(1,3)$, which is the group of the rotation along the origin, and $SU(2,\mathbb{H})$, which is the group of the unitary $2\times 2$ type matrix. While the elements of the group $SU(2,\mathbb{H})$ give a fillip to the hyperbolic spinors, the elements of the group $SO(1,3)$ give a fillip to the vectors with three real components in Minkowski space \cite{sattinger,Balci,erisir3,ketenci}.

A spinor with two hyperbolic components is represented as follows:
\begin{equation*}
    \eta=\begin{pmatrix}
    \eta_1\\
    \eta_2
    \end{pmatrix}
\end{equation*}
by the vectors $a,b,c\in\mathbb{R}_1^3$ such that
\begin{equation}\label{spinor}
\begin{array}{rl}
    a+jb=\eta^t\sigma\eta,\\
    c=-\widehat\eta^t\sigma\eta,
    \end{array}
\end{equation}
where “t” denotes the transposition, $\overline\eta$ is the conjugate of $\eta$, $\widehat\eta$ is the mate of $\eta$. Also, the followings can be written:
\begin{equation*}
   \widehat\eta=-\begin{pmatrix}
   0&1\\
   -1&0
   \end{pmatrix} \overline\eta=-\begin{pmatrix}
    0&1\\
   -1&0
   \end{pmatrix}\begin{pmatrix}
   \overline\eta_1\\
   \overline\eta_2
   \end{pmatrix}=\begin{pmatrix}
   -\overline\eta_2\\
   \overline\eta_1
   \end{pmatrix}.
\end{equation*}
Then, $2\times2$ hyperbolic symmetric matrices which are cartesian components for the vector $\sigma=(\sigma_1,\sigma_2,\sigma_3)$
\begin{equation}\label{matrices}
   \sigma_1= \begin{pmatrix}
    1&0\\
    0&-1
    \end{pmatrix}, \quad   \sigma_2= \begin{pmatrix}
    j&0\\
    0&j
    \end{pmatrix}, \quad  \sigma_3= \begin{pmatrix}
    0&-1\\
    -1&0
    \end{pmatrix}
\end{equation}
can be given \cite{Balci,erisir3,tarakcioglu,ketenci,ketenci1}.
The equation ${\overline{\eta^\prime}}^t\eta^\prime=\overline\eta^t\eta$ is satisfied where $U\in SU(2,\mathbb{H})$ which is an arbitrary matrix and $\eta^\prime=U\eta$. Hence, the norms of the non-null (spacelike and timelike) vectors $a^\prime,b^\prime,c^\prime$ match up with the hyperbolic spinor $\eta^\prime$ are equal to the norms of the non-null vectors $a,b,c$ match up with the hyperbolic spinor $\eta$. So, all elements of the group $SU(2,\mathbb{H})$ establish a transformation which turns into the orthogonal basis $\{a,b,c\}$ of the Minkowski 3-space to the orthogonal basis $\{a^\prime,b^\prime,c^\prime\}$. The correspondence between the hyperbolic spinors and orthogonal bases is two-to-one. That is, $U$
and $-U$, which are any two elements of the group $SU(2,\mathbb{H})$, generate the same ordered triad in $\mathbb{R}_1^3$. Also, the ordered triads $\{a,b,c\}, \{b,c,a\}, \{c,a,b\}$ correspond to different spinors, and the hyperbolic spinors $\eta$ and $-\eta$ correspond to the same ordered orthogonal basis. For the hyperbolic spinors $\eta$ and $\phi$, the following equations can be given: 
\begin{align}
\eta^t\sigma\phi&=\phi^t\sigma\eta,\label{prop2}
\\
    \overline{\eta^t\sigma\phi}&=-\widehat\eta^t\sigma\widehat\phi,\label{prop1}
\\
\widehat{\left(\varrho_1\eta+\varrho_2\phi\right)}&=\overline \varrho_1\widehat\eta+\overline \varrho_2\widehat\phi,\label{prop3}
\end{align}
where $\varrho_1,\varrho_2\in\mathbb{H}$
\cite{Balci,ketenci,erisir3,dogan2,ketenci1}.
Let $\vartheta=(\vartheta_1,\vartheta_2,\vartheta_3)\in\mathbb{H}^3$ be an isotropic vector (namely, length of this vector is zero: $\langle \vartheta,\vartheta \rangle=0$, $\vartheta\ne0$) in $\mathbb{R}_1^3$.
According to the above notions and notations, the following equations can be given:
\begin{eqnarray*}
     \vartheta_1=\eta_1^2-\eta^2_2, \quad  \vartheta_2=j(\eta^2_1+\eta^2_2), \quad \vartheta_3=-2\eta_1\eta_2.
\end{eqnarray*}
Also, the following equations
\begin{eqnarray}\label{xx}
     \eta_1=\pm\sqrt{\frac{\vartheta_1+j\vartheta_2}{2}} \quad \text{and} \quad \eta_2=\pm\sqrt{\frac{-\vartheta_1+j\vartheta_2}{2}}
\end{eqnarray}
can be given. In that case, $||a||=||b||=||c||=\overline\eta^t\eta$. According to the \eqref{spinor} and \eqref{matrices}, the followings
\begin{equation*}
    \begin{array}{rl}
     \vartheta_1=\eta^t\sigma_1\eta=\eta^2_1-\eta^2_2, \quad
       \vartheta_2=\eta^t\sigma_2\eta=j(\eta^2_1+\eta^2_2),\quad
        \vartheta_3=\eta^t\sigma_3\eta=-2\eta_1\eta_2
    \end{array}
\end{equation*}
and
\begin{align} \label{zz}
         a+jb&=\left(\eta^2_1-\eta^2_2,j(\eta^2_1+\eta^2_2),-2\eta_1\eta_2\right), \\
         c&= \left(\eta_1\overline\eta_2+\overline\eta_1\eta_2,j(\eta_1\overline\eta_2-\overline\eta_1\eta_2),\left|\eta_1\right|^2-\left|\eta_2\right|^2\right)
\end{align}
can be written \cite{Balci,ketenci,erisir3,dogan2}.
For more detailed information with respect to the hyperbolic spinor (especially related to hyperbolic spinors and moving frames), we want to refer to the studies \cite{Balci,ketenci,tarakcioglu,erisir3,dogan2}.

\subsection{Spacelike framed curves in $\mathbb{R}^3_1$}

\begin{definition}
	Let $\gamma:I\rightarrow\mathbb{R}^3_1$ be a spacelike curve in $\mathbb{R}^3_1$. For all $s\in I$, if the following conditions are satisfied
	\begin{equation*}
	  \langle \gamma^\prime(s),\nu_1(s) \rangle=0 \quad
\text{and} \quad
	  \langle \gamma^\prime(s),\nu_2(s) \rangle=0
	\end{equation*}
then, the map $(\gamma,\nu_1,\nu_2):I\rightarrow\mathbb{R}^3_1\times\Delta_1$ is named as a spacelike framed curve. Also, $\gamma$ is  named as a base curve of the spacelike framed curve. 
Here, $$\Delta_1=\{(\nu_1,\nu_2)\in \mathbb{S}_1^2\times \mathbb{H}_0^2|\langle \nu_1(s),\nu_2(s) \rangle=0\}$$ or $$\Delta_1=\{(\nu_1,\nu_2)\in \mathbb{H}_0^2\times\mathbb{S}_1^2 |\langle \nu_1(s),\nu_2(s) \rangle=0\}.$$
Then, a spacelike vector field is determined as $\mu(s)=\nu_1(s)\times \nu_2(s)$, which is a smooth function, and $\alpha(s)$ satisfies that $\gamma^\prime(s)=\alpha(s)\mu(s)$. Moreover, the base curve $\gamma(s)$ is singular at the point $s_0$ if and only if $\alpha(s_0)=0$ \cite{liandpei}.
\end{definition}
Furthermore, the Frenet-type formulas are given for a spacelike framed curve as follows \cite{liandpei}:
\begin{equation}\label{framed}
\begin{pmatrix}
\mu^\prime(s)\\
\nu^\prime_1(s)\\
\nu^\prime_2(s)
\end{pmatrix}=\begin{pmatrix}
0&-\delta(s)l_2(s)&\delta(s)l_3(s)\\
l_2(s)&0&l_1(s)\\
l_3(s)&l_1(s)&0
\end{pmatrix}\begin{pmatrix}
\mu(s)\\
\nu_1(s)\\
\nu_2(s)
\end{pmatrix}
\end{equation}
where $\delta(s)=\text{sign}(\nu_1(s))=\langle \nu_1(s),\nu_1(s) \rangle$ and the functions $(\alpha(s),l_1(s),l_2(s),l_3(s))$ are the curvature of the spacelike framed curve. These functions are determined as \cite{liandpei}:
\begin{equation}
\begin{array}{lcl}
\left\{\begin{array}{lcl}
        l_1(s)&=&\langle \nu^\prime_1(s),\nu_2(s) \rangle,\\
        l_2(s)&=&\langle \nu^\prime_1(s),\mu(s) \rangle,\\
        l_3(s)&=&\langle \nu^\prime_2(s),\mu(s) \rangle.
\end{array}\right.
\end{array}
\end{equation}

\subsection{Timelike framed curves in $\mathbb{R}^3_1$}
\begin{definition}
	Let $\gamma:I\rightarrow\mathbb{R}^3_1$ be a timelike curve in $\mathbb{R}^3_1$. For all $s\in I$, if the following conditions are satisfied
	\begin{equation*}
	  \langle \gamma^\prime(s),\nu_1(s) \rangle=0
\quad \text{and} \quad
	  \langle \gamma^\prime(s),\nu_2(s) \rangle=0
	\end{equation*}
then, the map $(\gamma,\nu_1,\nu_2):I\rightarrow\mathbb{R}^3_1\times\Delta_2$ is called as a timelike framed curve. If there exists $\left(\nu_1,\nu_2\right):I\rightarrow\Delta_2$ such that $(\gamma,\nu_1,\nu_2):I\rightarrow\mathbb{R}^3_1\times\Delta_2$ is a timelike framed curve, then the curve $\gamma$ is called as timelike framed type curve. Here, $\Delta_2=\{(\nu_1,\nu_2)|\langle \nu_1(s),\nu_2(s) \rangle=0\}\subset \mathbb{S}_1^2\times \mathbb{S}_1^2$.
Also, $\left\{\mu(s) ,\nu_1(s),\nu_2(s) \right\}$ is a moving frame along the curve $\gamma(s)$ in $\mathbb{R}_1^3$ such that $\mu(s)=\nu_1(s)\times \nu_2(s)\in \mathbb{H}_0^2$, and $\alpha(s)$ satisfies that $\gamma^\prime(s)=\alpha(s)\mu(s)$ \cite{Cansu}.
\end{definition}
Then, the Frenet-type formulas are presented for a timelike framed curve as follows \cite{Cansu}:
\begin{equation}\label{framedtimelike}
\begin{pmatrix}
\mu^\prime(s)\\
\nu^\prime_1(s)\\
\nu^\prime_2(s)
\end{pmatrix}=\begin{pmatrix}
0&l_2(s)&l_3(s)\\
l_2(s)&0&l_1(s)\\
l_3(s)&-l_1(s)&0
\end{pmatrix}\begin{pmatrix}
\mu(s)\\
\nu_1(s)\\
\nu_2(s)
\end{pmatrix}
\end{equation}
where the functions $(\alpha(s),l_1(s),l_2(s),l_3(s))$ are the curvature of the timelike framed curve. Then, these functions are written as \cite{Cansu}:
\begin{equation}
\begin{array}{lcl}
\left\{ \begin{array}{lcl}
l_1(s)&=&\langle \nu^\prime_1(s),\nu_2(s) \rangle,\\
        l_2(s)&=&-\langle \nu^\prime_1(s),\mu(s) \rangle,\\
        l_3(s)&=&-\langle \nu^\prime_2(s),\mu(s) \rangle,\\
      \alpha(s)&=&-\langle \gamma^\prime(s),\mu(s) \rangle.
\end{array}\right.
\end{array}
\end{equation}

\section{The New Adapted Frames for Non-Null Framed Curves}\label{original1}
In this section of this study, we introduce new types adapted frames for both spacelike framed curves and timelike framed curves in Minkowski $3$-space. We want to bring to the literature  these new types adapted frames, and this section makes more comprehensive to our study related to the hyperbolic spinor representations of non-null framed curves.
Thanks to the studies \cite{framed,liandpei,Cansu} and inspired by the study \cite{wang} (by using the method for constructing adapted frame), we obtain this section.
\subsection{A new adapted frame for spacelike framed curves}

\quad

The adapted frame for spacelike framed curves can be constructed and the followings are satisfied:
\begin{eqnarray}\label{AAA}
\left( \begin{matrix}
  \breve{\nu}_1(s) \\
  \breve{\nu}_2(s)
\end{matrix} \right)=\left( \begin{matrix}
   \cosh{{\Theta}}(s) & -\sinh{\Theta}(s) \\
 -\sinh{\Theta}(s)  &\cosh{\Theta}(s)
\end{matrix} \right)\left( \begin{matrix}
  \nu_1 \\
  \nu_2
\end{matrix} \right)
\end{eqnarray}
where $\Theta: I\rightarrow\mathbb{R}$ is a smooth function. Also, $(\gamma,\breve{\nu}_1,\breve{\nu}_2):I\rightarrow\mathbb{R}^3_1\times\Delta_1$ is also a spacelike framed curve and we get: 
\begin{align*}
    \breve{\mu}(s)&=\breve{\nu}_1(s)\times\breve{\nu}_2(s)\\
    &=\left( \cosh{\Theta}(s)\nu_1(s)-\sinh{\Theta}(s)\nu_2(s) \right)\times\left( -\sinh{\Theta}(s)\nu_1(s)+\cosh{\Theta}(s)\nu_2(s) \right)\\&=\nu_1(s)\times\nu_2(s)\\&=\mu(s).
\end{align*}
By straightforward calculations, we have:
\begin{align*}
   \breve{\nu}^\prime_1(s)&= \left( \Theta^\prime(s)-l_1 (s) \right)\sinh{\Theta}(s)\nu_1(s)+\left( -\Theta^\prime(s)+l_1(s) \right)\cosh{\Theta}(s)\nu_2(s)\\&\,\,\,\,\,\,\,+\left( l_2(s)\cosh{\Theta}(s)-l_3(s)\sinh{\Theta}(s)  \right)\mu(s),\\
    \breve{\nu}^\prime_2(s)&= \left( -\Theta^\prime(s)+l_1 (s) \right)\cosh{\Theta}(s)\nu_1(s)+\left( \Theta^\prime(s)-l_1(s) \right)\sinh{\Theta}(s)\nu_2(s)\\&\,\,\,\,\,\,\,+\left( l_3(s)\cosh{\Theta}(s)-l_2(s)\sinh{\Theta}(s)  \right)\mu(s).
\end{align*}
Provided that we take a smooth function $\Theta:I\rightarrow\mathbb{R}$
which holds:
\begin{equation}\label{l1}
\Theta^\prime(s)=l_1(s)
\end{equation}
then we get this triad $\{{\mu}(s),\breve{\nu}_1(s),\breve{\nu}_2(s)\}$ which is an adapted frame along the spacelike framed base curve $\gamma(s)$. Then, we have the following Frenet-Serret-type derivative formulas as follows (Bishop type frame):
\begin{equation}\label{framed2}
\begin{pmatrix}
\mu^\prime(s)\\
\breve{\nu}^\prime_1(s)\\
\breve{\nu}^\prime_2(s)
\end{pmatrix}=\begin{pmatrix}
0&-\delta(s)\breve{l}_2(s)&\delta(s)\breve{l}_3(s)\\
\breve{l}_2(s)&0&0\\
\breve{l}_3(s)&0&0
\end{pmatrix}\begin{pmatrix}
\mu(s)\\
\breve{\nu}_1(s)\\
\breve{\nu}_2(s)
\end{pmatrix}
\end{equation}
where $\breve{l}_2(s)$ and $\breve{l}_3(s)$ are written by
\begin{align}\label{bishopagore}
\left( \begin{matrix}
  \breve{l}_2(s) \\
  \breve{l}_3(s)
\end{matrix} \right)=\left( \begin{matrix}
   \cosh{{\Theta}}(s) & -\sinh{\Theta}(s) \\
-\sinh{\Theta}(s)  &\cosh{\Theta} (s) 
\end{matrix} \right)\left( \begin{matrix}
  l_2(s)\\
  l_3(s)
\end{matrix} \right)
\end{align}
On the other hand, let $\Theta: I\rightarrow\mathbb{R}$ be a  smooth function which holds \linebreak $l_2(s)\sinh\Theta(s)=l_3(s)\cosh\Theta(s)$. Suppose that 
\begin{equation}\label{qq}
l_2(s)=p(s)\cosh\Theta(s) \quad \text{and}\quad l_3(s)=p(s)\sinh\Theta(s)
\end{equation}
then we get as follows:
\begin{align*}
     {\mu}^\prime(s)&= -\delta(s)l_2(s)\nu_1(s)+\delta(s)l_3(s)\nu_2(s)\\
     &=-\delta(s)p(s)\breve{\nu}_1(s),
\end{align*}
\begin{align*}
     \breve{\nu}^\prime_1(s)&= \left( \Theta^\prime(s)-l_1 (s) \right)\sinh{\Theta}(s)\nu_1(s)+\left(-\Theta^\prime(s)+l_1(s) \right)\cosh{\Theta}(s)\nu_2(s)\\&\,\,\,\,\,\,\,+\left( l_2(s)\cosh{\Theta}(s)-l_3(s)\sinh{\Theta}(s)  \right)\mu(s)\\
     &=\left(-\Theta^\prime(s)+l_1(s)  \right)\breve\nu_2(s)+p(s)\mu(s),
\end{align*}
and
\begin{align*}
     \breve{\nu}^\prime_2(s)&= \left(-\Theta^\prime(s)+l_1(s) \right)\cosh{\Theta}(s)\nu_1(s)+\left( \Theta^\prime(s)-l_1(s)  \right)\sinh{\Theta}(s)\nu_2(s)\\&\,\,\,\,\,\,\,+\left( l_3(s)\cosh{\Theta}(s)-l_2(s)\sinh{\Theta}(s) \right)\mu(s)\\
     &=\left(-\Theta^\prime(s)+l_1(s) \right)\breve\nu_1(s).
\end{align*}
In that case, the triad $\{\mu(s),\breve\nu_1(s),\breve\nu_2(s)\}$ become an adapted frame along the spacelike framed curve $\gamma(s)$, and we get the following Frenet-Serret derivative formula:
\begin{equation}\label{framed3}
\begin{pmatrix}
\mu^\prime(s)\\
\breve{\nu}^\prime_1(s)\\
\breve{\nu}^\prime_2(s)
\end{pmatrix}=\begin{pmatrix}
0&-\delta(s)p(s)&0\\
p(s)&0&q(s)\\
0&q(s)&0
\end{pmatrix}\begin{pmatrix}
\mu(s)\\
\breve{\nu}_1(s)\\
\breve{\nu}_2(s)
\end{pmatrix}
\end{equation}
where the vectors $\mu(s), \nu_1(s)$ and $\nu_2(s)$ are called as spacelike generalized tangent vector, spacelike generalized principal normal, and spacelike generalized binormal vector of the spacelike framed curve, respectively. Also,
\begin{equation}\label{qqq}
    p(s)=||\mu^\prime(s)||\ne0 \quad \text{and} \quad q(s)=-\Theta^\prime(s)+l_1(s).
\end{equation} Then, the functions $(p(s), q(s), \alpha(s))$ are called as the curvature of the adapted frame of the spacelike framed curve $\gamma(s)$.

\begin{proposition}
Let $\left(\gamma, \breve\nu_1,\breve\nu_2 \right): I\rightarrow\mathbb{R}_1^3\times\Delta_1$ be a spacelike framed curve. The relation between the first curvature (curvature) $\kappa(s)$ and the second curvature (torsion) $\tau(s)$, and the curvature of the spacelike framed curve $(p(s),q(s),\alpha(s))$ of a regular spacelike curve are written as follows:
\begin{equation*}
    \kappa(s)=\cfrac{\delta(s)p(s)}{\alpha(s)} \quad \text{and} \quad \tau(s)=\cfrac{q(s)}{\alpha(s)}.
\end{equation*}
\end{proposition}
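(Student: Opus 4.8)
The plan is to compare the adapted-frame derivative formula \eqref{framed3} with the classical Frenet--Serret equations of the underlying regular spacelike curve $\gamma$. Since $\gamma$ is regular, the function $\alpha(s)$ in $\gamma^\prime(s)=\alpha(s)\mu(s)$ is nowhere zero, so $\mu(s)$ is, up to sign, the unit tangent $T(s)$ of $\gamma$, and the arc-length parameter $\tilde s$ satisfies $d\tilde s/ds=\|\gamma^\prime(s)\|=|\alpha(s)|$. First I would pass from the parameter $s$ to arc length by the chain rule, $\frac{d}{d\tilde s}=\frac{1}{\alpha(s)}\frac{d}{ds}$, choosing the orientation so that $\alpha>0$ (this choice does not affect the statement), and identify the adapted vector fields with the Frenet frame, $\mu=T$, $\breve\nu_1=N$, $\breve\nu_2=B$, the latter being forced only up to the global signs that the two-to-one spinor correspondence and the causal characters impose.

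With these identifications the two identities fall out directly from \eqref{framed3}. From its first row, $\mu^\prime(s)=-\delta(s)p(s)\breve\nu_1(s)$; dividing by $\alpha(s)$ gives $\frac{dT}{d\tilde s}=\frac{-\delta(s)p(s)}{\alpha(s)}N$, and comparing with $\frac{dT}{d\tilde s}=\kappa(s)N$ yields $\kappa(s)=\frac{\delta(s)p(s)}{\alpha(s)}$. From the third row, $\breve\nu_2^\prime(s)=q(s)\breve\nu_1(s)$; dividing by $\alpha(s)$ gives $\frac{dB}{d\tilde s}=\frac{q(s)}{\alpha(s)}N$, and comparison with $\frac{dB}{d\tilde s}=\tau(s)N$ gives $\tau(s)=\frac{q(s)}{\alpha(s)}$.

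To make the argument independent of any sign convention for $N$ and $B$, I would alternatively compute $\kappa$ and $\tau$ from the coordinate-free expressions $\kappa=\|\gamma^\prime\times\gamma^{\prime\prime}\|/\|\gamma^\prime\|^3$ and the corresponding mixed-product formula for $\tau$. Using $\gamma^\prime=\alpha\mu$ and $\gamma^{\prime\prime}=\alpha^\prime\mu-\alpha\delta p\,\breve\nu_1$ together with $\mu\times\breve\nu_1=\pm\breve\nu_2$ and the fact that $\mu,\breve\nu_1,\breve\nu_2$ are unit vectors, one obtains $\gamma^\prime\times\gamma^{\prime\prime}=-\alpha^2\delta p\,(\mu\times\breve\nu_1)$, hence $\|\gamma^\prime\times\gamma^{\prime\prime}\|=\alpha^2|p|$ and $\kappa=|p|/|\alpha|$, which agrees with $\delta(s)p(s)/\alpha(s)$ under the sign conventions in \cite{liandpei}; one more differentiation, paired with $\breve\nu_2$ through \eqref{framed3}, produces $\tau=q/\alpha$.

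The real difficulty is bookkeeping rather than anything conceptual: one has to keep track of the causal character of the principal normal (encoded by $\delta(s)=\langle\nu_1(s),\nu_1(s)\rangle$), the sign of $\alpha(s)$ coming from the chosen orientation, and the $\pm$ ambiguities in $\mu\times\breve\nu_1$ and in the spinor-to-frame correspondence, then verify that all of these are consistent with the signs appearing in \eqref{framed3} and in the Frenet--Serret equations of a regular spacelike curve. Once the conventions are pinned down, the two displayed formulas are immediate consequences of the first and third rows of \eqref{framed3}.
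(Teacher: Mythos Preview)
Your proposal is correct. The alternative route you sketch at the end---computing $\gamma',\gamma'',\gamma'''$ from $\gamma'=\alpha\mu$ and the adapted-frame equations \eqref{framed3}, then plugging into the coordinate-free formulas $\kappa=\|\gamma'\times\gamma''\|/\|\gamma'\|^3$ and $\tau=\det(\gamma',\gamma'',\gamma''')/\|\gamma'\times\gamma''\|^2$---is precisely the paper's proof; your primary approach via arc-length reparametrization and direct comparison with the Frenet--Serret system is a slightly different but equally valid route that trades the triple-derivative computation for the sign bookkeeping you mention.
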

\begin{proof}
The followings can be written easily:
\begin{align*}
    \gamma^\prime(s)=&\alpha(s)\mu(s),\\
    \gamma^{\prime\prime}(s)=&\alpha^\prime(s)\mu(s)-\delta(s)\alpha(s)p(s)\breve\nu_1(s),\\
    \gamma^{\prime\prime\prime}(s)=&\left(\alpha^{\prime\prime}(s)-\delta(s)\alpha(s)p^2(s)\right)\mu(s)-\delta(s)\left(2\alpha^\prime(s)p(s)+\alpha(s)p^\prime(s)\right)\breve\nu_1(s)\\&-\delta(s)\alpha(s)p(s)q(s)\breve\nu_2(s).
    \end{align*}
    Then, we have:
    \begin{align*}
       ||\gamma^\prime(s)||&=|\alpha(s)|,\\
       \gamma^\prime(s)\times\gamma^{\prime\prime}(s)&=-\delta(s)\alpha^2(s)p(s)\breve\nu_2(s),\\
       \det\left(\gamma^\prime(s),\gamma^{\prime\prime}(s),\gamma^{\prime\prime\prime}(s)\right)&=\delta^2(s)\alpha^3(s)p^2(s)q(s).
    \end{align*}
    In that case, we get:
    \begin{align*}
        \kappa(s)&=\cfrac{||\gamma^\prime(s)\times\gamma^{\prime\prime}(s)||}{||\gamma^\prime(s)||^3}=\cfrac{\delta(s)p(s)}{|\alpha(s)|} 
    \end{align*}
    and
        \begin{align*}
        \tau(s)&=\cfrac{\det\left(\gamma^\prime(s),\gamma^{\prime\prime}(s),\gamma^{\prime\prime\prime}(s)\right)}{||\gamma^\prime(s)\times\gamma^{\prime\prime}(s)||^2}=\cfrac{q(s)}{\alpha(s)}.
    \end{align*}
\end{proof}

\subsection{A new adapted frame for timelike framed curves}

We obtain the adapted frame for timelike adapted framed curves and the followings hold:
\begin{eqnarray}\label{BBB}
\left( \begin{matrix}
  \breve{\nu}_1(s) \\
  \breve{\nu}_2(s)
\end{matrix} \right)=\left( \begin{matrix}
   \cos{{\Theta}}(s) & -\sin{\Theta}(s)\\
 \sin{\Theta}(s)  &\cos{\Theta}(s) 
\end{matrix} \right)\left( \begin{matrix}
  \nu_1(s) \\
  \nu_2(s)
\end{matrix} \right)
\end{eqnarray}
where $\Theta(s)$ is a smooth function. Moreover, $(\gamma,\breve{\nu}_1,\breve{\nu}_2):I\rightarrow\mathbb{R}^3_1\times\Delta_2$ is a timelike framed curve and we have:
\begin{align*}
    \breve{\mu}(s)&=\breve{\nu}_1(s)\times\breve{\nu}_2(s)=\left( \cos{\Theta}(s)\nu_1(s)-\sin{\Theta}(s)\nu_2(s) \right)\times\left( \sin{\Theta}(s)\nu_1(s)+\cos{\Theta}(s)\nu_2(s) \right)\\&=\nu_1(s)\times\nu_2(s)\\&=\mu(s).
\end{align*}
Then, if the required calculations are done, we get:
\begin{align*}
   {\breve{\nu}_1}^\prime(s)&= \left( -\Theta^\prime(s)+l_1(s) \right)\sin{\Theta}(s)\nu_1(s)+\left(-\Theta^\prime(s)+l_1(s) \right)\cos{\Theta}(s)\nu_2(s)\\&\,\,\,\,\,\,\,+\left( l_2(s)\cos{\Theta}(s)-l_3(s)\sin{\Theta}(s)  \right)\mu(s),\\
    \breve{\nu}^\prime_2(s)&= \left( \Theta^\prime(s)-l_1(s)  \right)\cos{\Theta}(s)\nu_1(s)+\left( -\Theta^\prime(s)+l_1(s) \right)\sin{\Theta}(s)\nu_2(s)\\&\,\,\,\,\,\,\,+\left( l_2(s)\sin{\Theta}(s)+l_3(s)\cos{\Theta}(s)  \right)\mu(s).
\end{align*}
Taking a smooth function $\Theta:I\rightarrow\mathbb{R}$ which is satisfied $\Theta^\prime(s)=l_1(s)$, then we have this triad $\{{\mu}(s),\breve{\nu}_1(s),\breve{\nu}_2(s)\}$ which is an adapted frame along the timelike framed base curve $\gamma(s)$. Then, we have the following Frenet-Serret type derivative formulas as follows (Bishop type frame):
\begin{equation}\label{framed2timelike}
\begin{pmatrix}
\mu^\prime(s)\\
\breve{\nu}^\prime_1(s)\\
\breve{\nu}^\prime_2(s)
\end{pmatrix}=\begin{pmatrix}
0&\breve{l}_2(s)&\breve{l}_3(s)\\
\breve{l}_2(s)&0&0\\
\breve{l}_3(s)&0&0
\end{pmatrix}\begin{pmatrix}
\mu(s)\\
\breve{\nu}_1(s)\\
\breve{\nu}_2(s)
\end{pmatrix}
\end{equation}
where $\breve{l}_2(s)$ and $\breve{l}_3(s)$ are written by
\begin{align*}
\left( \begin{matrix}
  \breve{l}_2(s) \\
  \breve{l}_3(s) 
\end{matrix} \right)=\left( \begin{matrix}
   \cos{{\Theta}}(s) & -\sin{\Theta}(s)\\
 \sin{\Theta}(s)  &\cos{\Theta}(s)  
\end{matrix} \right)\left( \begin{matrix}
  l_2(s)\\
  l_3(s)
\end{matrix} \right)
\end{align*}
Then, let $\Theta: I\rightarrow\mathbb{R}$ be a  smooth function which holds $l_2(s)\sin\Theta(s)=-l_3(s)\cos\Theta(s)$. Suppose that $l_2(s)=p(s)\cos\Theta(s)$ and $l_3(s)=-p(s)\sin\Theta(s)$, then we obtain as follows:
\begin{align*}
     {\mu}^\prime(s)&= l_2(s)\nu_1(s)+l_3(s)\nu_2(s)\\
     &=p(s)\breve{\nu}_1(s),
\end{align*}
\begin{align*}
    \breve{\nu}^\prime_1(s)&= \left( -\Theta^\prime(s)+l_1(s) \right)\sin{\Theta}(s)\nu_1(s)+\left(-\Theta^\prime(s)+l_1(s) \right)\cos{\Theta}(s)\nu_2(s)\\&\,\,\,\,\,\,\,+\left( l_2(s)\cos{\Theta}(s)-l_3(s)\sin{\Theta}(s)  \right)\mu(s)\\
     &=\left(-\Theta^\prime(s)+l_1(s)  \right)\breve\nu_2(s)+p(s)\mu(s),
\end{align*}
and

\begin{align*}
    \breve{\nu}^\prime_2(s)=&\left( \Theta^\prime(s)-l_1(s) \right)\cos{\Theta}(s)\nu_1(s)+\left( -\Theta^\prime(s)+l_1(s) \right)\sin{\Theta}(s)\nu_2(s)\\&+\left( l_2(s)\sin{\Theta}(s)+l_3(s)\cos{\Theta}(s)  \right)\mu(s)\\
     =&\left( \Theta^\prime(s)-l_1(s)  \right)\breve\nu_1.
\end{align*}
Then, the triad $\{\mu(s),\breve\nu_1(s),\breve\nu_2(s)\}$ construct an adapted frame along the timelike curve $\gamma(s)$, and we have the following Frenet-Serret derivative formula:

\begin{equation}\label{framedtimelike3}
\begin{pmatrix}
\mu^\prime(s)\\
\breve{\nu}^\prime_1(s)\\
\breve{\nu}^\prime_2(s)
\end{pmatrix}=\begin{pmatrix}
0&p(s)&0\\
p(s)&0&q(s)\\
0&-q(s)&0
\end{pmatrix}\begin{pmatrix}
\mu(s)\\
\breve{\nu}_1(s)\\
\breve{\nu}_2(s)
\end{pmatrix}
\end{equation}
where the vectors $\mu(s), \nu_1(s)$ and $\nu_2(s)$ are called as timelike generalized tangent vector, timelike generalized principal normal, and timelike generalized binormal vector of the timelike framed curve, respectively. Also, $p(s)=||\mu^\prime(s)||\ne0$ and \linebreak $q(s)=-\Theta^\prime(s)+l_1(s)$. Then, the functions $(p(s), q(s), \alpha(s))$ are called as the curvature of the adapted frame of the timelike framed curve $\gamma(s)$.

\begin{proposition}
Let $\left(\gamma, \breve\nu_1,\breve\nu_2 \right): I\rightarrow\mathbb{R}_1^3\times\Delta_2$ be a timelike framed curve. The relation between the first curvature (curvature) $\kappa(s)$ and the second curvature (torsion) $\tau(s)$, and the curvature of the timelike framed curve $(p(s),q(s),\alpha(s))$ of a regular timelike curve are expressed as follows:
\begin{equation*}
    \kappa(s)=\cfrac{p(s)}{\alpha(s)} \quad \text{and} \quad \tau(s)=\cfrac{q(s)}{\alpha(s)}.
\end{equation*}
\end{proposition}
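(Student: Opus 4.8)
The plan is to follow exactly the pattern of the proof of the preceding Proposition for spacelike framed curves, now using the timelike Frenet--Serret derivative formula \eqref{framedtimelike3} together with the metric data of the adapted frame $\{\mu,\breve\nu_1,\breve\nu_2\}$: the vector $\mu\in\mathbb{H}_0^2$ is timelike, so $\langle\mu,\mu\rangle=-1$, while $\breve\nu_1,\breve\nu_2\in\mathbb{S}_1^2$ are unit spacelike vectors orthogonal to $\mu$ and to each other, with $\mu=\breve\nu_1\times\breve\nu_2$.

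First I would start from $\gamma'(s)=\alpha(s)\mu(s)$ and differentiate twice, each time substituting $\mu'=p\,\breve\nu_1$, $\breve\nu_1'=p\,\mu+q\,\breve\nu_2$ and $\breve\nu_2'=-q\,\breve\nu_1$ from \eqref{framedtimelike3}. Suppressing the argument $s$, this produces
\begin{align*}
\gamma'&=\alpha\,\mu,\\
\gamma''&=\alpha'\,\mu+\alpha p\,\breve\nu_1,\\
\gamma'''&=\left(\alpha''+\alpha p^2\right)\mu+\left(2\alpha' p+\alpha p'\right)\breve\nu_1+\alpha pq\,\breve\nu_2.
\end{align*}
Next I would evaluate the three quantities entering the classical curvature/torsion formulas. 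From $\langle\gamma',\gamma'\rangle=-\alpha^2$ one gets $\|\gamma'\|=|\alpha|$. The vector product gives $\gamma'\times\gamma''=\alpha^2 p\,(\mu\times\breve\nu_1)=\pm\alpha^2 p\,\breve\nu_2$ by the orientation of the adapted frame, hence $\|\gamma'\times\gamma''\|=\alpha^2|p|$. Finally, since $\langle u\times v,w\rangle=\det(u,v,w)$ for the Lorentzian vector product used here, $\det(\gamma',\gamma'',\gamma''')=\langle\gamma'\times\gamma'',\gamma'''\rangle=\alpha^3 p^2 q\,\langle\breve\nu_2,\breve\nu_2\rangle=\alpha^3 p^2 q$.

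Substituting into $\kappa=\|\gamma'\times\gamma''\|/\|\gamma'\|^3$ and $\tau=\det(\gamma',\gamma'',\gamma''')/\|\gamma'\times\gamma''\|^2$ then yields $\kappa=p/|\alpha|$ and $\tau=q/\alpha$, which is the asserted formula (up to the same identification of $|\alpha|$ with $\alpha$ that already appears in the spacelike Proposition, and recalling $p=\|\mu'\|>0$). The only delicate point is the sign bookkeeping: one must fix once and for all the cyclic orientation of $\{\mu,\breve\nu_1,\breve\nu_2\}$ induced by $\mu=\breve\nu_1\times\breve\nu_2$, verify that with this convention $\mu\times\breve\nu_1=\breve\nu_2$ (the sign being the one dictated by the Lorentzian triple product with a timelike leg), and check that the factor $\delta(s)$ that was present in the spacelike computation drops out here because in $\Delta_2$ the vectors $\nu_1,\nu_2$ are always spacelike. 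Everything else is routine differentiation and inner-product evaluation.
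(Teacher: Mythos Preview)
Your proof is correct and follows essentially the same route as the paper's own argument: compute $\gamma',\gamma'',\gamma'''$ from \eqref{framedtimelike3}, extract $\|\gamma'\|$, $\gamma'\times\gamma''$, and $\det(\gamma',\gamma'',\gamma''')$, and substitute into the standard curvature and torsion formulas. The paper simply asserts $\gamma'\times\gamma''=\alpha^2p\,\breve\nu_2$ without the $\pm$ hedge you include, but otherwise the computations and the final identification $\kappa=p/|\alpha|$, $\tau=q/\alpha$ are identical.
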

\begin{proof}
The followings can be obtained:
\begin{align*}
    \gamma^\prime(s)&=\alpha(s)\mu(s),\\
    \gamma^{\prime\prime}(s)&=\alpha^\prime(s)\mu(s)+\alpha(s)p(s)\breve\nu_1(s),\\
    \gamma^{\prime\prime\prime}(s)&=\left(\alpha^{\prime\prime}(s)+\alpha(s)p^2(s)\right)\mu(s)+\left(2\alpha^\prime(s)p(s)+\alpha(s)p^\prime(s)\right)\breve\nu_1(s)+\alpha(s)p(s)q(s)\breve\nu_2(s).
    \end{align*}
    In that case, we get:
    \begin{align*}
       ||\gamma^\prime(s)||&=|\alpha(s)|,\\
       \gamma^\prime(s)\times\gamma^{\prime\prime}(s)&=\alpha^2(s)p(s)\breve\nu_2(s),\\
       \det\left(\gamma^\prime(s),\gamma^{\prime\prime}(s),\gamma^{\prime\prime\prime}(s)\right)&=\alpha^3(s)p^2(s)q(s).
    \end{align*}
    Then, we have:
    \begin{align*}
        \kappa(s)&=\cfrac{||\gamma^\prime(s)\times\gamma^{\prime\prime}(s)||}{||\gamma^\prime(s)||^3}=\cfrac{p(s)}{|\alpha(s)|} 
    \end{align*}
    and
        \begin{align*}
        \tau(s)&=\cfrac{\det\left(\gamma^\prime(s),\gamma^{\prime\prime}(s),\gamma^{\prime\prime\prime}(s)\right)}{||\gamma^\prime(s)\times\gamma^{\prime\prime}(s)||^2}=\cfrac{q(s)}{\alpha(s)}.
    \end{align*}
\end{proof}

\section{Hyperbolic spinor representations of non-null framed curves}\label{original2}
In this section, we introduce the hyperbolic spinor representations of spacelike and timelike framed curves in Minkowski 3-space $\mathbb{R}_1^3$. We organize by seperating this section as two subparts. First of all, hyperbolic spinor representations of spacelike framed curves are determined and examined, then hyperbolic spinor representations of timelike framed curves are given. Also, we obtain some geometric interpretations with respect to them.

It should be noted that, we do not use the parameter “$s$” all of the equations in the definitions, theorems and conclusions in this section for the sake of the brevity.

\subsection{Hyperbolic spinor representations of spacelike framed curves}

\quad 

In this part of this study, we investigate and scrutinize the hyperbolic spinor representations of spacelike framed curves in $\mathbb{R}_1^3$.

\begin{definition}\label{definition4.1} Let
$\left(\gamma,  {\nu}_1,  {\nu}_2  \right): I\rightarrow \mathbb{R}_1^3\times\Delta_1$ be a spacelike framed curve and the hyperbolic spinor $\phi$ represents the triad $\{ \nu_1,  {\nu_2}, {\mu}\}$. Then, the hyperbolic spinor representations of the spacelike framed curve are defined as follows:
\begin{align}
    \nu_1+j\nu_2&=\phi^t\sigma\phi,\label{eq1}
\\
\mu&=-\widehat{\phi}^t\sigma\phi,\label{eq2}
\end{align}
where $\overline{\phi}^t\phi=1$.
\end{definition}

\begin{theorem}\label{thmnew}
Let $\left(\gamma, \nu_1,\nu_2 \right): I\rightarrow\mathbb{R}_1^3\times\Delta_1$ be a spacelike framed curve and the hyperbolic spinor $\phi$ represents the triad $\{\nu_1,\nu_2, \mu\}$. The single spinor equation that includes the curvatures of the spacelike framed curve is written as:
\begin{equation}\label{new}
    \cfrac{d\phi}{ds}=\cfrac{1}{2}\left[jl_1\phi-\left(l_2+jl_3\right)\widehat\phi\right].
\end{equation}
\end{theorem}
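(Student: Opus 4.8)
The plan is to use the normalization $\overline\phi^t\phi=1$, which makes the $2\times2$ matrix with columns $\phi$ and $\widehat\phi$ have determinant $\overline\phi^t\phi=1$, so that $\{\phi,\widehat\phi\}$ is a basis of the $\mathbb H$-module of two-component hyperbolic spinors. Consequently I may write
\begin{equation*}
\frac{d\phi}{ds}=f\,\phi+g\,\widehat\phi
\end{equation*}
for suitable hyperbolic-valued functions $f=f(s)$ and $g=g(s)$, and everything reduces to identifying $f=\tfrac{j}{2}l_1$ and $g=-\tfrac12(l_2+jl_3)$.

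The heart of the argument is to differentiate the defining relation \eqref{eq1} in two ways. On the spinor side, the symmetry \eqref{prop2} gives $\frac{d}{ds}\!\left(\phi^t\sigma\phi\right)=2(\phi')^t\sigma\phi$; plugging in the ansatz and using \eqref{eq1}, \eqref{eq2} (so that $\widehat\phi^t\sigma\phi=-\mu$) turns this into $2f(\nu_1+j\nu_2)-2g\,\mu$. On the geometric side, differentiating $\nu_1+j\nu_2$ and substituting the Frenet-type formulas \eqref{framed} gives $\nu_1'+j\nu_2'=(l_2+jl_3)\mu+l_1(\nu_2+j\nu_1)$; since $j^{-1}=j$ one has $\nu_2+j\nu_1=j(\nu_1+j\nu_2)$, so this equals $jl_1(\nu_1+j\nu_2)+(l_2+jl_3)\mu$. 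Equating the two expressions and invoking that $\nu_1+j\nu_2$ and $\mu$ are linearly independent over $\mathbb H$ — which follows by matching real and $j$-parts against the basis $\{\nu_1,\nu_2,\mu\}$ of $\mathbb R_1^3$ — forces $2f=jl_1$ and $-2g=l_2+jl_3$, which is precisely \eqref{new}.

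I would then record two consistency checks. Differentiating $\overline\phi^t\phi=1$ and using $\overline{\widehat\phi}{}^t\phi=\overline\phi^t\widehat\phi=0$ and $\overline{\widehat\phi}{}^t\widehat\phi=1$ only reconfirms $f+\overline f=0$, i.e. $f$ has vanishing hyperbolic real part, as required by $f=\tfrac{j}{2}l_1$. Differentiating \eqref{eq2} with the help of \eqref{prop1}, \eqref{prop3}, $\widehat{\widehat\phi}=-\phi$ and $\widehat\phi^t\sigma\widehat\phi=-\overline{\phi^t\sigma\phi}$ should reproduce the derivative of $\mu$ prescribed by \eqref{framed}. The remaining work is routine hyperbolic bookkeeping ($j^2=1$, conjugation, and how the hat operation acts on $\mathbb H$-linear combinations via \eqref{prop3}); the only genuinely delicate point is the linear-independence step that legitimizes reading off $f$ and $g$, and one should stay alert to the sign factor $\delta=\langle\nu_1,\nu_1\rangle$ appearing in \eqref{framed} when performing the $\mu$-check.
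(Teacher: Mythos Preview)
Your proof is correct and follows essentially the same route as the paper: write $\phi'=f\phi+g\widehat\phi$ in the basis $\{\phi,\widehat\phi\}$, differentiate \eqref{eq1}, use the symmetry \eqref{prop2} together with \eqref{eq1}--\eqref{eq2} on one side and the Frenet-type formulas \eqref{framed} on the other, then read off $f$ and $g$. The paper omits your two consistency checks and the explicit linear-independence justification, but the core argument is identical.
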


\begin{proof}
By taking the derivative of the equation \eqref{eq1} with respect to the parameter $s$, then we have:
\begin{equation}\label{dif2}
    \cfrac{d\nu_1}{ds}+j\cfrac{d  {\nu_2}}{ds}=\left(\cfrac{d\phi}{ds} \right)^t
\sigma\phi+\phi^t\sigma\cfrac{d\phi}{ds}.
\end{equation}
Since the system $\{\phi,\widehat\phi\}$ is a basis for the hyperbolic spinor $\phi$, then $\cfrac{d\phi}{ds}$ can be written as follows:
\begin{equation}\label{baz2}
    \cfrac{d\phi}{ds}=\xi\phi+\chi\widehat\phi
\end{equation}
where $\xi$ and $\chi$ are any two hyperbolic valued functions. By using the  equations \eqref{framed}, \eqref{dif2} and \eqref{baz2}, we have:
\begin{align*}
        l_1{\nu}_2+ l_2{\mu} +j\left(l_1{\nu}_1+ l_3 {\mu}   \right)&=\left(\xi\phi+\chi \widehat\phi \right)^t\sigma\phi+\phi^t\sigma\left(\xi\phi+\chi\widehat\phi   \right)\\
         &=\xi\phi^t\sigma\phi+\chi{\widehat\phi}^t\sigma\phi+\xi\phi^t\sigma\phi+\chi\phi^t\sigma\widehat\phi.
\end{align*}
Then, with the help of the equations \eqref{prop2}, \eqref{eq1} and \eqref{eq2}, we get:
\begin{align*}
     jl_1\left({\nu}_1+j{\nu}_2\right)+\left({l}_2+jl_3\right){\mu} &=2\xi\left( \nu_1+j{\nu}_2\right)-2\chi{\mu}.
\end{align*}
In that case, we obtain:
\begin{equation}\label{j}
 \xi=\cfrac{jl_1}{2}\quad \text{and} \quad
\chi=-\cfrac{l_2+jl_3}{2}.   
\end{equation}
By substituting the equation \eqref{j} in the equation \eqref{baz2}, then we get the equation \eqref{new}.
\end{proof}

\begin{theorem}\label{thmnew2} Let $\left(\gamma, \nu_1,\nu_2 \right): I\rightarrow\mathbb{R}_1^3\times\Delta_1$ be a spacelike framed curve and the hyperbolic spinor $\phi$ represents the triad $\{\nu_1,\nu_2, \mu\}$. The hyperbolic spinor representations of vectors of the spacelike framed curve are given as:
\begin{align}
   {\mu} &=-\widehat{\phi}^t\sigma\phi, \label{a1}\\
    \nu_1&=\cfrac{1}{2}\left(\phi^t\sigma\phi-{\widehat{\phi}^t\sigma\widehat{\phi}}\right),\label{a2}\\
      {\nu_2}&=\cfrac{j}{2}\left(\phi^t\sigma\phi+\widehat{\phi}^t\sigma\widehat\phi\right).\label{a3}
\end{align}
\end{theorem}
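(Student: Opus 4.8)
The plan is to read off $\mu$, $\nu_1$ and $\nu_2$ one at a time from the two defining identities \eqref{eq1}--\eqref{eq2}. Equation \eqref{a1} requires no work: it is literally \eqref{eq2}. The content of the theorem is therefore to disentangle the real vector pair $(\nu_1,\nu_2)$ from the single hyperbolic-valued identity $\nu_1+j\nu_2=\phi^t\sigma\phi$.

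First I would take the hyperbolic conjugate of \eqref{eq1}. Because $\nu_1$ and $\nu_2$ are vectors with \emph{real} components, conjugation over $\mathbb{H}$ only flips the sign in front of $j$, so the left-hand side becomes $\nu_1-j\nu_2$. For the right-hand side I would apply property \eqref{prop1} with $\eta=\phi$, obtaining $\overline{\phi^t\sigma\phi}=-\widehat{\phi}^t\sigma\widehat{\phi}$. This yields the companion identity $\nu_1-j\nu_2=-\widehat{\phi}^t\sigma\widehat{\phi}$.

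The rest is linear algebra over $\mathbb{H}$. Adding $\nu_1+j\nu_2=\phi^t\sigma\phi$ to $\nu_1-j\nu_2=-\widehat{\phi}^t\sigma\widehat{\phi}$ and dividing by $2$ gives \eqref{a2}; subtracting them gives $2j\nu_2=\phi^t\sigma\phi+\widehat{\phi}^t\sigma\widehat{\phi}$, and multiplying by $j$ and using $j^2=1$ gives \eqref{a3}.

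There is essentially no serious obstacle; the argument is formal once \eqref{prop1} is available. The only point that deserves a moment's care is bookkeeping of the mate operation: one should note $\widehat{\widehat{\phi}}=-\phi$ (directly from the definition of $\widehat{\cdot}$) and be attentive to which slot of $\eta^t\sigma\phi$ the conjugation identity is applied to, so that no spurious signs creep into \eqref{a2}--\eqref{a3}. As a sanity check one can verify that the right-hand sides of \eqref{a2}--\eqref{a3} reproduce the component expressions in \eqref{zz} and are consistent with the normalization $\overline{\phi}^t\phi=1$, which forces $\|\mu\|=\|\nu_1\|=\|\nu_2\|=1$ as required by membership in $\Delta_1$.
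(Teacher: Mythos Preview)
Your proposal is correct and essentially identical to the paper's argument: the paper phrases the same computation as taking $\nu_1=\operatorname{Re}(\phi^t\sigma\phi)$ and $\nu_2=\operatorname{Im}(\phi^t\sigma\phi)$ via the hyperbolic real/imaginary part formulas $\operatorname{Re}(\rho)=\tfrac{1}{2}(\rho+\overline{\rho})$, $\operatorname{Im}(\rho)=\tfrac{j}{2}(\rho-\overline{\rho})$ and then invokes \eqref{prop1}, which is exactly your ``conjugate, add, subtract'' manoeuvre. Your extra remarks about $\widehat{\widehat{\phi}}=-\phi$ and the normalization check are sound but not needed for the proof.
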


\begin{proof}
Let the hyperbolic spinor $\phi$ corresponds to the triad $\{\nu_1,\nu_2, \mu\}$ of the spacelike framed curve $\left(\gamma, \nu_1,\nu_2 \right)$. According to the equation \eqref{eq1}, we get $\nu_1=Re(\phi^t\sigma\phi)$ and $\nu_2=Im(\phi^t\sigma\phi)$. We have already $\mu=-\widehat{\phi}^t\sigma\phi$ from the equation \eqref{eq2}. Also, by using the well-known properties of hyperbolic numbers: $Re(\rho)=\frac{1}{2}\left(\rho+\overline \rho  \right)$ and $Im(\rho)=\frac{j}{2}\left(\rho-\overline \rho  \right)$ for every $\rho\in\mathbb{H}$, we can write as follows:
    \begin{equation*}
        \nu_1=\cfrac{1}{2}\left(\phi^t\sigma\phi+\overline{\phi^t\sigma\phi}\right),
    \end{equation*}
      \begin{equation*}
       \nu_2=\cfrac{j}{2}\left(\phi^t\sigma\phi-\overline{\phi^t\sigma\phi}\right).
    \end{equation*}
According to the last two equations and by using the equation\eqref{prop1}, we obtain:
    \begin{equation*}
        \nu_1=\cfrac{1}{2}\left(\phi^t\sigma\phi-{\widehat{\phi}^t\sigma\widehat{\phi}}\right),
    \end{equation*}
      \begin{equation*}
        \nu_2=\cfrac{j}{2}\left(\phi^t\sigma\phi+\widehat{\phi}^t\sigma\widehat\phi\right),
    \end{equation*}
and complete the proof.
\end{proof}
\begin{corollary}\label{corollary4.1}
   Let $\left(\gamma, \nu_1,\nu_2 \right): I\rightarrow\mathbb{R}_1^3\times\Delta_1$ be a spacelike framed curve and the hyperbolic spinor $\phi$ represents the triad $\{\nu_1,\nu_2, \mu\}$. Then the hyperbolic spinor components for the spacelike framed vectors are given as:
    \begin{align*}
        \nu_1&= \cfrac{1}{2}\left(\phi^2_1 -\phi^2_2+\overline\phi^2_1-\overline\phi^2_2, j\left(\phi^2_1+\phi^2_2-\overline\phi^2_1-\overline\phi^2_2   \right), -2\left(\phi_1\phi_2+\overline\phi_1\overline\phi_2\right) \right),
       \\
             \nu_2&= \cfrac{j}{2}\left(\phi^2_1 -\phi^2_2-\overline\phi^2_1+\overline\phi^2_2, j\left(\phi^2_1+\phi^2_2+\overline\phi^2_1+\overline\phi^2_2   \right), 2\left(\overline\phi_1\overline\phi_2-\phi_1\phi_2\right) \right),
            \\
             \mu&= \left(\phi_1\overline\phi_2 +\overline\phi_1\phi_2,j\left(\phi_1\overline\phi_2-\overline\phi_1\phi_2\right),|\phi_1|^2-|\phi_2|^2   \right).
    \end{align*}
\end{corollary}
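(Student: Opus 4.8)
The plan is to start from the coordinate-free identities of Theorem \ref{thmnew2} and expand each bilinear form entrywise, using the explicit matrices in \eqref{matrices} together with the formula for the mate $\widehat\phi=(-\overline\phi_2,\overline\phi_1)^t$. Writing $\overline\phi^t\phi=1$ throughout, the whole argument is a direct computation organized around three building blocks: $\phi^t\sigma\phi$, $\widehat\phi^t\sigma\widehat\phi$, and $\widehat\phi^t\sigma\phi$.

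First I would record $\phi^t\sigma\phi$. Applying \eqref{zz} with $\eta$ replaced by $\phi$ (equivalently, multiplying out $\phi^t\sigma_k\phi$ for $k=1,2,3$ with the matrices \eqref{matrices}) gives $\phi^t\sigma\phi=(\phi_1^2-\phi_2^2,\ j(\phi_1^2+\phi_2^2),\ -2\phi_1\phi_2)$. Next I would compute $\widehat\phi^t\sigma\widehat\phi$ the same way, now with the entries $-\overline\phi_2$ and $\overline\phi_1$ in place of $\phi_1$ and $\phi_2$; keeping track that $\sigma_2$ carries a factor $j$ with $j^2=1$, this yields $\widehat\phi^t\sigma\widehat\phi=(\overline\phi_2^2-\overline\phi_1^2,\ j(\overline\phi_1^2+\overline\phi_2^2),\ 2\overline\phi_1\overline\phi_2)$. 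Finally I would expand $\widehat\phi^t\sigma_k\phi$ for $k=1,2,3$ and negate, obtaining $\mu=-\widehat\phi^t\sigma\phi=(\phi_1\overline\phi_2+\overline\phi_1\phi_2,\ j(\phi_1\overline\phi_2-\overline\phi_1\phi_2),\ |\phi_1|^2-|\phi_2|^2)$, which is exactly the asserted formula for $\mu$ via \eqref{a1}.

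It then remains only to substitute these component vectors into \eqref{a2} and \eqref{a3}. For $\nu_1=\tfrac12\bigl(\phi^t\sigma\phi-\widehat\phi^t\sigma\widehat\phi\bigr)$ I would subtract the two vectors componentwise, and for $\nu_2=\tfrac{j}{2}\bigl(\phi^t\sigma\phi+\widehat\phi^t\sigma\widehat\phi\bigr)$ I would add them, in each case leaving the overall scalars $\tfrac12$ and $\tfrac{j}{2}$ in front; this reproduces the stated expressions for $\nu_1$ and $\nu_2$. There is no genuine obstacle in this proof — it is purely mechanical — and the only place demanding care is the bookkeeping of hyperbolic conjugation, in particular the facts $\overline{j}=-j$ and $j^2=1$ as they enter the expansions of $\widehat\phi^t\sigma\widehat\phi$ and $\widehat\phi^t\sigma\phi$, together with keeping all signs consistent with the conventions fixed in \eqref{matrices} and in the definition of $\widehat\phi$.
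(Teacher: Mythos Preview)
Your proposal is correct and follows essentially the same approach as the paper: both compute the three bilinear forms $\phi^t\sigma\phi$, $\widehat\phi^t\sigma\widehat\phi$, and $\widehat\phi^t\sigma\phi$ entrywise from the explicit matrices \eqref{matrices} and the definition of $\widehat\phi$, and then substitute into \eqref{a1}--\eqref{a3}. The only cosmetic difference is that the paper displays each matrix multiplication explicitly, whereas you describe the same computation in prose and invoke \eqref{zz} for the first block.
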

\begin{proof}
 Let $\left(\gamma, \nu_1,\nu_2 \right): I\rightarrow\mathbb{R}_1^3\times\Delta_1$ be a spacelike framed curve and the hyperbolic spinor $\phi$ corresponds to the triad $\{\nu_1,\nu_2, \mu\}$. Then, for $\phi^t\sigma\phi$, we can write:
 \begin{equation}\label{1}
 \begin{array}{lcl}\left\{
 \begin{array}{lcl}
     \phi^t\sigma_1\phi&=&\begin{pmatrix}
     \phi_1&\phi_2
     \end{pmatrix}\begin{pmatrix}
     1&0\\
     0&-1
     \end{pmatrix}\begin{pmatrix}
     \phi_1\\
     \phi_2
     \end{pmatrix}=\phi_1^2-\phi_2^2,\\
     \phi^t\sigma_2\phi&=&\begin{pmatrix}
     \phi_1&\phi_2
     \end{pmatrix}\begin{pmatrix}
     j&0\\
     0&j
     \end{pmatrix}\begin{pmatrix}
     \phi_1\\
     \phi_2
     \end{pmatrix}=j(\phi_1^2+\phi_2^2),\\
     \phi^t\sigma_3\phi&=&\begin{pmatrix}
     \phi_1&\phi_2
     \end{pmatrix}\begin{pmatrix}
     0&-1\\
     -1&0
     \end{pmatrix}\begin{pmatrix}
     \phi_1\\
     \phi_2
     \end{pmatrix}=-2\phi_1\phi_2.
     \end{array}
     \right.
      \end{array}
 \end{equation}
 Also, for $\widehat{\phi}^t\sigma\widehat{\phi}$, we have:
 \begin{equation}\label{2}
 \begin{array}{lcl}\left\{
 \begin{array}{lcl}
     \widehat{\phi}^t\sigma_1\widehat{\phi}&=&\begin{pmatrix}
     -\overline{\phi}_2&\overline{\phi}_1
     \end{pmatrix}\begin{pmatrix}
     1&0\\
     0&-1
     \end{pmatrix}\begin{pmatrix}
     -\overline{\phi}_2\\
     \overline{\phi}_1
     \end{pmatrix}=\overline{\phi}_2^2-\overline{\phi}_1^2,\\
      \widehat{\phi}^t\sigma_2\widehat{\phi}&=&\begin{pmatrix}
     -\overline{\phi}_2&\overline{\phi}_1
     \end{pmatrix}\begin{pmatrix}
     j&0\\
     0&j
     \end{pmatrix}\begin{pmatrix}
     -\overline{\phi}_2\\
     \overline{\phi}_1
     \end{pmatrix}=j\left(\overline{\phi}_2^2+\overline{\phi}_1^2\right),\\
      \widehat{\phi}^t\sigma_3\widehat{\phi}&=&\begin{pmatrix}
     -\overline{\phi}_2&\overline{\phi}_1
     \end{pmatrix}\begin{pmatrix}
     0&-1\\
     -1&0
     \end{pmatrix}\begin{pmatrix}
     -\overline{\phi}_2\\
     \overline{\phi}_1
     \end{pmatrix}=2\overline{\phi}_1\overline{\phi}_2.
     \end{array}
     \right.
      \end{array}
 \end{equation}
Then, for $\widehat{\phi}^t\sigma\phi$, we get:
 \begin{equation}\label{3}
 \begin{array}{lcl}\left\{
 \begin{array}{lcl}
     \widehat{\phi}^t\sigma_1\phi&=&\begin{pmatrix}
     -\overline{\phi}_2&\overline{\phi}_1
     \end{pmatrix}\begin{pmatrix}
     1&0\\
     0&-1
     \end{pmatrix}\begin{pmatrix}
     \phi_1\\
     \phi_2
     \end{pmatrix}=-\overline{\phi}_2\phi_1-\overline{\phi}_1\phi_2,\\
     \widehat{\phi}^t\sigma_2\phi&=&\begin{pmatrix}
     -\overline{\phi}_2&\overline{\phi}_1
     \end{pmatrix}\begin{pmatrix}
     j&0\\
     0&j
     \end{pmatrix}\begin{pmatrix}
     \phi_1\\
     \phi_2
     \end{pmatrix}=i\left(-\overline{\phi}_2\phi_1+\overline{\phi}_1\phi_2\right),\\
     \widehat{\phi}^t\sigma_3\phi&=&\begin{pmatrix}
     -\overline{\phi}_2&\overline{\phi}_1
     \end{pmatrix}\begin{pmatrix}
     0&-1\\
     -1&0
     \end{pmatrix}\begin{pmatrix}
     \phi_1\\
     \phi_2
     \end{pmatrix}=-|\overline{\phi}_1|^2+|\overline{\phi}_2|^2.
     \end{array}\right.
      \end{array}
 \end{equation}
 If we substitute the equations \eqref{1}, \eqref{2} and \eqref{3} in the equations \eqref{a1}, \eqref{a2} and \eqref{a3}, we have desired result.
\end{proof}
From the Definition \ref{definition4.1} to Corollary \ref{corollary4.1}, all of the notions and notations in the definition, theorems, and corollaries can be obtained easily for the adapted frame for spacelike framed curve, but for the sake of the brevity and  since they are very clear, we do not give them. 
However, we want to present also the following definition:
\begin{definition}\label{definition4.2} Let
$\left(\gamma,  \breve{\nu}_1,  \breve{\nu}_2  \right): I\rightarrow \mathbb{R}_1^3\times\Delta_1$ be a spacelike framed curve and the hyperbolic spinor $\phi$ represents the triad $\{ \breve\nu_1,  {\breve\nu_2}, {\mu}\}$. Then, the hyperbolic spinor representations of the adapted framed frame along the spacelike framed curve are defined as follows:
\begin{align}
    \breve\nu_1+j\breve\nu_2&=\Upsilon^t\sigma\Upsilon,\label{eqq1}
\\
\mu&=-\widehat{\Upsilon}^t\sigma\Upsilon,\label{eqq2}
\end{align}
where $\overline{\Upsilon}^t\Upsilon=1$.
\end{definition}
Also, the following single spinor equations with respect to the adapted frame for spacelike framed curves can be seen:
\begin{enumerate}
    \item Let $\left(\gamma, \breve\nu_1,\breve\nu_2 \right): I\rightarrow\mathbb{R}_1^3\times\Delta_1$ be a spacelike framed curve and the hyperbolic spinor $\Upsilon$ represents the triad $\{\breve\nu_1,\breve\nu_2, \mu\}$. Then, we have (according to the Bishop type frame): \begin{equation}
    \cfrac{d\Upsilon}{ds}=-\cfrac{1}{2}\left(\breve l_2+j\breve l_3\right)\widehat\Upsilon. 
\end{equation}
    \item Let $\left(\gamma, \breve\nu_1,\breve\nu_2 \right): I\rightarrow\mathbb{R}_1^3\times\Delta_1$ be a spacelike framed curve and the hyperbolic spinor $\Upsilon$ represents the triad $\{\breve\nu_1,\breve\nu_2, \mu\}$. Then, we have (according to the Frenet-Serret type frame): \begin{equation}
    \cfrac{d\Upsilon}{ds}=\cfrac{1}{2}\left(jq\Upsilon-p\widehat\Upsilon \right).
\end{equation}
\end{enumerate}

Now, in the following Theorem \ref{relation}, let us construct the spinor relations between adapted frame and general frame of the spacelike framed curves:

\begin{theorem}\label{relation}
 Let $\left(\gamma, \nu_1,\nu_2 \right), \left(\gamma, \breve\nu_1,\breve\nu_2 \right): I\rightarrow\mathbb{R}_1^3\times\Delta_1$ be spacelike framed curves and the hyperbolic spinors $\phi$ and $\Upsilon$ represent the triads $\{\nu_1,\nu_2, \mu\}$ and $\{\breve\nu_1,\breve\nu_2,\mu   \}$, respectively.
The following spinor relations are given:
\begin{align}
   \phi^t\sigma\phi&=e^{j\Theta}\left(\Upsilon^t\sigma\Upsilon  \right),\label{ed}\\
   \mu&=\mu\nonumber,
\end{align}
where $\mu=\breve\mu$ and the hyperbolic angle between the vectors $\nu_1$ and $\breve\nu_1$ is $\Theta$.
 \end{theorem}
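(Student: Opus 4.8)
The plan is to reduce the whole statement to the defining relation \eqref{AAA} between the two frames together with the hyperbolic Euler formula. First I would recall that, by Definition \ref{definition4.1} and Definition \ref{definition4.2}, the hyperbolic spinors in question satisfy $\nu_1+j\nu_2=\phi^t\sigma\phi$ and $\breve\nu_1+j\breve\nu_2=\Upsilon^t\sigma\Upsilon$, and that the identity $\breve\mu=\mu$ has already been established in Section \ref{original1}. This immediately disposes of the trivial relation $\mu=\mu$ and reduces the theorem to exhibiting the factor $e^{j\Theta}$ relating $\phi^t\sigma\phi$ and $\Upsilon^t\sigma\Upsilon$.

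Next I would substitute \eqref{AAA}, namely $\breve\nu_1=\cosh\Theta\,\nu_1-\sinh\Theta\,\nu_2$ and $\breve\nu_2=-\sinh\Theta\,\nu_1+\cosh\Theta\,\nu_2$, into the hyperbolic vector $\breve\nu_1+j\breve\nu_2=\Upsilon^t\sigma\Upsilon$ and regroup the terms carrying $\nu_1$ and $\nu_2$. The coefficient of $\nu_1$ is $\cosh\Theta-j\sinh\Theta$, while the coefficient of $\nu_2$ is $-\sinh\Theta+j\cosh\Theta$, which factors as $j(\cosh\Theta-j\sinh\Theta)$ using $j^2=1$. Hence $\breve\nu_1+j\breve\nu_2=(\cosh\Theta-j\sinh\Theta)(\nu_1+j\nu_2)=e^{-j\Theta}(\nu_1+j\nu_2)$ by the hyperbolic Euler formula. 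Rewriting this in spinor form gives $\Upsilon^t\sigma\Upsilon=e^{-j\Theta}\,\phi^t\sigma\phi$, and multiplying through by $e^{j\Theta}$ yields \eqref{ed}, completing the proof.

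The computation is short, so I do not expect a genuine obstacle; the only point needing care is the bookkeeping of the hyperbolic unit $j$ when factoring the coefficient of $\nu_2$, where one must use $j^2=1$ (not $-1$) so that the $2\times2$ hyperbolic rotation in \eqref{AAA} collapses into multiplication by the single hyperbolic number $e^{-j\Theta}$. I would close with the remark that, since $\Theta$ is exactly the hyperbolic angle carrying $\nu_1$ to $\breve\nu_1$ (equivalently $\Theta'=l_1$ by \eqref{l1}), the relation \eqref{ed} is compatible with the single-spinor equation \eqref{new} for $\phi$ and its adapted-frame counterpart for $\Upsilon$, so the two descriptions of the spacelike framed curve are consistent.
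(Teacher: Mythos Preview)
Your proof is correct and follows essentially the same route as the paper: both reduce the claim to combining the frame relation \eqref{AAA} with the hyperbolic Euler formula, then invoke Definitions \ref{definition4.1} and \ref{definition4.2}. The only cosmetic difference is that the paper first inverts \eqref{AAA} to write $\nu_1,\nu_2$ in terms of $\breve\nu_1,\breve\nu_2$ and computes $\nu_1+j\nu_2=e^{j\Theta}(\breve\nu_1+j\breve\nu_2)$ directly, whereas you substitute \eqref{AAA} as stated to get $\breve\nu_1+j\breve\nu_2=e^{-j\Theta}(\nu_1+j\nu_2)$ and then multiply through; the content is identical.
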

 \begin{proof} By using the equation \eqref{AAA}, we have:
 \begin{align}
    \nu_1+j\nu_2&=\left( \cosh{\Theta}\breve{\nu}_1+\sinh{\Theta}\breve{\nu}_2  \right)+j\left( \sinh{\Theta}\breve{\nu}_1+\cosh{\Theta}\breve{\nu}_2  \right)\nonumber\\
    &=\left(\breve{\nu}_1+j\breve{\nu}_2 \right)\cosh{\Theta}+j\left( \breve{\nu}_1+j\breve{\nu}_2 \right)\sinh{\Theta}\nonumber\\
    &=\left( \breve{\nu}_1+j\breve{\nu}_2 \right)\left( \cosh\Theta+j\sinh\Theta  \right)\nonumber\\
    &=e^{j\Theta}\left( \breve{\nu}_1+j\breve{\nu}  _2 \right).\label{ee}
\end{align}
Then, via the equation \eqref{eq1} and \eqref{eqq1}, we have the desired result.
 \end{proof}

\begin{theorem} Let $\left(\gamma, \nu_1,\nu_2 \right), \left(\gamma, \breve\nu_1,\breve\nu_2 \right): I\rightarrow\mathbb{R}_1^3\times\Delta_1$ be spacelike framed curves and the hyperbolic spinors $\phi$ and $\Upsilon$ represent the triads $\{\nu_1,\nu_2, \mu\}$ and $\{\breve\nu_1,\breve\nu_2,\mu   \}$, respectively. The following relation between these spinors $\phi$ and $\Upsilon$ holds:
    \begin{equation}\label{3.12}
       \phi=\pm e^{j\frac{\Theta}{2}}\Upsilon. 
    \end{equation}
\end{theorem}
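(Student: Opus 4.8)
The plan is to exhibit a unit hyperbolic spinor manufactured out of $\Upsilon$ that represents exactly the same ordered orthogonal triad $\{\nu_1,\nu_2,\mu\}$ as $\phi$, and then to invoke the two‑to‑one correspondence between unit hyperbolic spinors and ordered orthogonal bases of $\mathbb{R}_1^3$ recalled in Section~\ref{preliminaries}. Here $\Theta$ is the hyperbolic angle between $\nu_1$ and $\breve\nu_1$ appearing in \eqref{AAA} and in Theorem~\ref{relation}.

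First I would set $\psi:=e^{j\Theta/2}\Upsilon$, where $e^{j\Theta/2}=\cosh\frac{\Theta}{2}+j\sinh\frac{\Theta}{2}$ is the hyperbolic exponential of the half‑angle, and record the elementary identities $\bigl(e^{j\Theta/2}\bigr)^2=e^{j\Theta}$, $\overline{e^{j\Theta/2}}=e^{-j\Theta/2}$ and $e^{j\Theta/2}\,\overline{e^{j\Theta/2}}=1$. Pulling the hyperbolic scalar through the bilinear map $\eta\mapsto\eta^t\sigma\eta$ and using equation \eqref{ed} of Theorem~\ref{relation} gives
\begin{equation*}
\psi^t\sigma\psi=\bigl(e^{j\Theta/2}\bigr)^2\bigl(\Upsilon^t\sigma\Upsilon\bigr)=e^{j\Theta}\bigl(\Upsilon^t\sigma\Upsilon\bigr)=\phi^t\sigma\phi=\nu_1+j\nu_2 .
\end{equation*}

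Next I would handle the mate. Applying property \eqref{prop3} to the single term $e^{j\Theta/2}\Upsilon$ yields $\widehat{\psi}=\overline{e^{j\Theta/2}}\,\widehat\Upsilon=e^{-j\Theta/2}\widehat\Upsilon$, hence
\begin{equation*}
-\widehat{\psi}^t\sigma\psi=-e^{-j\Theta/2}e^{j\Theta/2}\,\widehat\Upsilon^t\sigma\Upsilon=-\widehat\Upsilon^t\sigma\Upsilon=\mu ,
\end{equation*}
by \eqref{eqq2}, and likewise $\overline{\psi}^t\psi=\overline{e^{j\Theta/2}}e^{j\Theta/2}\,\overline\Upsilon^t\Upsilon=1$, so $\psi$ is a unit hyperbolic spinor. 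By \eqref{spinor} the spinors $\psi$ and $\phi$ therefore represent one and the same ordered orthogonal triad $\{\nu_1,\nu_2,\mu\}$.

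The last step is the delicate one: I would conclude $\phi=\pm\psi=\pm e^{j\Theta/2}\Upsilon$, which is \eqref{3.12}, precisely because the spinor–basis correspondence is two‑to‑one, i.e. a given ordered orthogonal basis is represented only by a hyperbolic spinor and its negative. This is exactly the main obstacle to phrase carefully, since one must be certain that "same triad" forces equality up to sign and nothing coarser; if an elementary substitute is preferred, the same conclusion can be squeezed out of the component identities $\phi_1^2=e^{j\Theta}\Upsilon_1^2$, $\phi_2^2=e^{j\Theta}\Upsilon_2^2$, $\phi_1\phi_2=e^{j\Theta}\Upsilon_1\Upsilon_2$ obtained by expanding both sides of \eqref{ed} as in \eqref{1}, together with the normalization $\overline\phi^t\phi=1$ to pin down a consistent sign.
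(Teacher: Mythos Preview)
Your argument is correct, and it is a cleaner variant of what the paper does. The paper's own proof is the componentwise route you sketch at the very end: expand both sides of \eqref{ed} via \eqref{zz} to obtain $\phi_1^2=e^{j\Theta}\Upsilon_1^2$ and $\phi_2^2=e^{j\Theta}\Upsilon_2^2$, take hyperbolic square roots, and then appeal to the two--to--one correspondence to force a common sign. Your main argument instead builds the candidate $\psi=e^{j\Theta/2}\Upsilon$ first, checks directly that $\psi^t\sigma\psi=\phi^t\sigma\phi$ and $-\widehat\psi^t\sigma\psi=\mu$ via the bilinearity, \eqref{prop3}, and Theorem~\ref{relation}, and only then invokes the two--to--one correspondence. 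What your approach buys is that it never touches components and makes the role of the half--angle transparent; what the paper's approach buys is that it is entirely elementary and does not lean on the abstract two--to--one statement until the very last sign reconciliation. Your closing remark about using $\phi_1\phi_2=e^{j\Theta}\Upsilon_1\Upsilon_2$ to pin down a consistent sign is in fact a small improvement over the paper, which passes over the a priori possibility of mismatched signs in the two square roots without comment.
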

\begin{proof}
Via the equations \eqref{zz} and \eqref{ee}, we have:
\begin{align*} \phi^t\sigma\phi&=\left(\phi^2_1-\phi^2_2,j\left( \phi^2_1+\phi^2_2 \right), -2\phi_1\phi_2 \right),\\
\Upsilon^t\sigma\Upsilon&=\left(\Upsilon^2_1-\Upsilon^2_2,j\left( \Upsilon^2_1+\Upsilon^2_2 \right), -2\Upsilon_1\Upsilon_2 \right),
\end{align*}
and then we get $\phi^2_1=e^{j\Theta}\Upsilon^2_1$ and $\phi^2_2=e^{j\Theta}\Upsilon^2_2$. 
 Thus, $\phi_1=\pm e^{j\frac{\Theta}{2}}\phi_1$ and $\phi_2=\pm e^{j\frac{\Theta}{2}}\phi_2$ can be obtained. The spinor $\phi$ and $-\phi$ correspond to the triad $\{\nu_1,\nu_2,\mu   \}$, and the spinor $\Upsilon$ and $-\Upsilon$ correspond to the triad $\{\breve\nu_1,\breve\nu_2,\mu   \}$. Hence, we can write $\phi=\pm e^{j\frac{\Theta}{2}}\Upsilon$.
\end{proof}

\begin{corollary} Let $\left(\gamma, \nu_1,\nu_2 \right), \left(\gamma, \breve\nu_1,\breve\nu_2 \right): I\rightarrow\mathbb{R}_1^3\times\Delta_1$ be spacelike framed curves and the hyperbolic spinors $\phi$ and $\Upsilon$ represent the triads $\{\nu_1,\nu_2, \mu\}$ and $\{\breve\nu_1,\breve\nu_2,\mu   \}$, respectively. Then, the angle between the spinors $\phi$ and $\Upsilon$ is $\Theta/2$.
\end{corollary}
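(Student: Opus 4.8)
The plan is to derive the statement directly from the preceding theorem, equation \eqref{3.12}, combined with the convention for the hyperbolic angle between two unit hyperbolic spinors. Recall that, in the same way the Euler formula $e^{j\Theta}=\cosh\Theta+j\sinh\Theta$ from Section~\ref{preliminaries} measures a hyperbolic rotation of a vector pair, two hyperbolic spinors $\phi,\Upsilon$ normalized by $\overline{\phi}^t\phi=\overline{\Upsilon}^t\Upsilon=1$ are said to enclose a hyperbolic angle $\psi$ precisely when $\phi=\pm e^{j\psi}\Upsilon$; equivalently, $\psi$ is the hyperbolic argument of the unit hyperbolic number $\overline{\Upsilon}^t\phi$, since a short computation gives $\overline{\Upsilon}^t\phi=\pm e^{j\psi}\overline{\Upsilon}^t\Upsilon=\pm e^{j\psi}=\pm(\cosh\psi+j\sinh\psi)$, using $\overline{e^{j\psi}}=e^{-j\psi}$.

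First I would invoke equation \eqref{3.12}, which asserts $\phi=\pm e^{j\frac{\Theta}{2}}\Upsilon$, where $\Theta$ is the hyperbolic angle between $\nu_1$ and $\breve\nu_1$ fixed by the change of frame \eqref{AAA}. Matching this against the convention above, the hyperbolic rotation carrying $\Upsilon$ to $\phi$ is $e^{j\Theta/2}$, so the hyperbolic angle between the two spinors is $\Theta/2$. The sign ambiguity is harmless: $\Upsilon$ and $-\Upsilon$ (respectively $\phi$ and $-\phi$) represent the same ordered orthonormal triad, as recalled after \eqref{matrices}, hence are regarded as enclosing zero angle, so the $\pm$ in \eqref{3.12} does not alter the conclusion. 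This yields the corollary.

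The step requiring the most care is the bookkeeping around the notion of spinor angle, rather than any genuine computation. One should verify that $\overline{\Upsilon}^t\phi$ is indeed a unit hyperbolic number of the form $\pm e^{j\psi}$ — which follows from \eqref{3.12}, the normalization $\overline{\Upsilon}^t\Upsilon=1$, and $\overline{e^{j\Theta/2}}=e^{-j\Theta/2}$ — and, for consistency, that the half-angle relation between the frame angle $\Theta$ (acting on the vector parts $\nu_1+j\nu_2=e^{j\Theta}(\breve\nu_1+j\breve\nu_2)$, cf.\ \eqref{AAA} and \eqref{ee}) and the spinor angle $\Theta/2$ is compatible with the two-to-one correspondence between unit hyperbolic spinors and ordered orthonormal triads. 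Once this is made explicit, the corollary is immediate from \eqref{3.12}.
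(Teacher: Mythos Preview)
Your proposal is correct and follows exactly the route the paper intends: the corollary is stated without proof in the paper because it is meant to be read off directly from equation \eqref{3.12}, $\phi=\pm e^{j\Theta/2}\Upsilon$. Your extra care in spelling out what ``hyperbolic angle between unit spinors'' means and why the sign ambiguity is harmless goes beyond what the paper provides, but is entirely in line with its implicit reasoning.
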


\begin{theorem} Let $\left(\gamma, \nu_1,\nu_2 \right), \left(\gamma, \breve\nu_1,\breve\nu_2 \right): I\rightarrow\mathbb{R}_1^3\times\Delta_1$ be spacelike framed curves and the hyperbolic spinors $\phi$ and $\Upsilon$ represents the triads $\{\nu_1,\nu_2, \mu\}$ and $\{\breve\nu_1,\breve\nu_2,\mu   \}$, respectively. The following relation between the spinors $\phi$ and $\Upsilon$ is satisfied:
\begin{equation}\label{4.17}
    \widehat\phi=\pm e^{-j\frac{\Theta}{2}}\widehat\Upsilon.
\end{equation}
\end{theorem}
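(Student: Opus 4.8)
The plan is to apply the previously established relation \eqref{3.12}, namely $\phi=\pm e^{j\frac{\Theta}{2}}\Upsilon$, directly to the mate operation, exploiting the fact that taking the mate is conjugate-linear in the hyperbolic scalar. Concretely, I would start from $\phi=\pm e^{j\frac{\Theta}{2}}\Upsilon$ and write $\phi=\varrho_1\Upsilon+\varrho_2\cdot 0$ with $\varrho_1=\pm e^{j\frac{\Theta}{2}}\in\mathbb{H}$, so that property \eqref{prop3} gives
\begin{equation*}
    \widehat{\phi}=\widehat{\left(\pm e^{j\frac{\Theta}{2}}\Upsilon\right)}=\pm\,\overline{e^{j\frac{\Theta}{2}}}\,\widehat{\Upsilon}.
\end{equation*}

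Next I would compute the hyperbolic conjugate of the rotor using the hyperbolic Euler formula: since $e^{j\frac{\Theta}{2}}=\cosh\frac{\Theta}{2}+j\sinh\frac{\Theta}{2}$ and conjugation sends $j\mapsto -j$ (while $\Theta$ is real), we get $\overline{e^{j\frac{\Theta}{2}}}=\cosh\frac{\Theta}{2}-j\sinh\frac{\Theta}{2}=e^{-j\frac{\Theta}{2}}$. Substituting this into the previous display yields $\widehat{\phi}=\pm e^{-j\frac{\Theta}{2}}\widehat{\Upsilon}$, which is exactly \eqref{4.17}. One should also note that the sign ambiguity is consistent: the spinors $\pm\phi$ both correspond to the triad $\{\nu_1,\nu_2,\mu\}$ and $\pm\Upsilon$ both correspond to $\{\breve\nu_1,\breve\nu_2,\mu\}$, so the $\pm$ in front is the same indeterminacy already present in \eqref{3.12}.

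There is no real obstacle here; the only point that must be handled carefully is that the mate is \emph{not} $\mathbb{H}$-linear but conjugate-linear, so the rotor $e^{j\Theta/2}$ must be conjugated when pulled out of $\widehat{\,\cdot\,}$ — this is precisely the content of \eqref{prop3}, and forgetting it would produce the wrong sign on the hyperbolic angle. An alternative, slightly longer route would be to mimic the proof of \eqref{3.12}: use \eqref{prop1} together with \eqref{ee} to show $\widehat{\phi}^t\sigma\widehat{\phi}=\overline{e^{j\Theta}}\,\widehat{\Upsilon}^t\sigma\widehat{\Upsilon}=e^{-j\Theta}\,\widehat{\Upsilon}^t\sigma\widehat{\Upsilon}$, read off $\widehat{\phi}_1^{\,2}=e^{-j\Theta}\widehat{\Upsilon}_1^{\,2}$ and $\widehat{\phi}_2^{\,2}=e^{-j\Theta}\widehat{\Upsilon}_2^{\,2}$ via \eqref{zz}, take square roots, and fix the sign by the two-to-one correspondence; but the first approach via \eqref{prop3} is the cleanest and is the one I would present.
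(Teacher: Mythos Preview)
Your proposal is correct and follows essentially the same route as the paper: take the mate of both sides of \eqref{3.12} and invoke the conjugate-linearity property \eqref{prop3} to pull the rotor out as its hyperbolic conjugate $\overline{e^{j\Theta/2}}=e^{-j\Theta/2}$. The paper's proof is even terser than yours, but the argument is identical.
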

\begin{proof}By taking the mate of both sides of the equation \eqref{3.12}, we have
$
    \widehat \phi=\widehat{\pm e^{j\frac{\Theta}{2}}\Upsilon}.
$
With the help of the equation \eqref{prop3}, we get  $\widehat\phi=\pm e^{-j\frac{\Theta}{2}}\widehat\Upsilon$.
\end{proof}

\begin{corollary} Let $\left(\gamma, \nu_1,\nu_2 \right), \left(\gamma, \breve\nu_1,\breve\nu_2 \right): I\rightarrow\mathbb{R}_1^3\times\Delta_1$ be spacelike framed curves and the hyperbolic spinors $\phi$ and $\Upsilon$ represent the triads $\{\nu_1,\nu_2, \mu\}$ and $\{\breve\nu_1,\breve\nu_2,\mu   \}$, respectively. While the spinor $\phi$ makes a rotation with the angle $\Theta/2$ to the spinor $\Upsilon$, the spinor $\widehat\phi$ makes an opposite rotation with the same angle to the spinor $\widehat\Upsilon$.
\end{corollary}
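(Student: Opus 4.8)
The plan is to read the claim off directly from the two theorems immediately preceding it, namely the relation $\phi=\pm e^{j\frac{\Theta}{2}}\Upsilon$ of \eqref{3.12} and its counterpart $\widehat\phi=\pm e^{-j\frac{\Theta}{2}}\widehat\Upsilon$ of \eqref{4.17}, and then to interpret the hyperbolic exponential factors geometrically. First I would recall the hyperbolic Euler-type formula $e^{j\Theta/2}=\cosh(\Theta/2)+j\sinh(\Theta/2)$, so that multiplying a hyperbolic spinor by $e^{j\Theta/2}$ is precisely a hyperbolic rotation through the hyperbolic angle $\Theta/2$, whereas multiplying by $e^{-j\Theta/2}$ is the hyperbolic rotation through $-\Theta/2$, i.e.\ the rotation of equal magnitude in the opposite sense.

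Next I would apply \eqref{3.12} to conclude that $\phi$ is carried to $\Upsilon$ by the hyperbolic rotation with angle $\Theta/2$, and dually apply \eqref{4.17} to conclude that $\widehat\phi$ is carried to $\widehat\Upsilon$ by the hyperbolic rotation with angle $-\Theta/2$. Comparing the two rotation angles $\Theta/2$ and $-\Theta/2$ yields exactly the asserted statement: while $\phi$ rotates into $\Upsilon$ through $\Theta/2$, the mate spinor $\widehat\phi$ rotates into $\widehat\Upsilon$ through the same hyperbolic angle but in the opposite direction.

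Since both ingredients are already established, the argument is purely a matter of rereading the exponential factors, so I expect no genuine obstacle here; the only point that warrants a word of care is the sign ambiguity $\pm$, which I would dispose of exactly as in the proof of \eqref{3.12}, by noting that $\phi$ and $-\phi$ (respectively $\widehat\phi$ and $-\widehat\phi$) correspond to the same ordered orthogonal triad in $\mathbb{R}_1^3$ in view of the two-to-one correspondence discussed after \eqref{zz}, so the choice of sign does not affect the rotation angle.
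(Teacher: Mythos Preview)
Your proposal is correct and matches the paper's approach exactly: the paper states this corollary without proof, as it is an immediate geometric reading of the two preceding relations \eqref{3.12} and \eqref{4.17}, which is precisely what you do. Your added remarks on the hyperbolic Euler formula and the sign ambiguity are appropriate clarifications but go slightly beyond what the paper spells out.
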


\begin{theorem}  Let $\left(\gamma, \nu_1,\nu_2 \right), \left(\gamma, \breve\nu_1,\breve\nu_2 \right): I\rightarrow\mathbb{R}_1^3\times\Delta_1$ be spacelike framed curves and the hyperbolic spinors $\phi$ and $\Upsilon$ represent the triads $\{\nu_1,\nu_2, \mu\}$ and $\{\breve\nu_1,\breve\nu_2,\mu   \}$, respectively.
The derivative of the spinor $\phi$ can be written by using the curvatures with respect to the adapted spacelike framed frame (according to the Bishop type frame).
\begin{equation}\label{4.18}
    \cfrac{d\phi}{ds}=\cfrac{1}{2}\left[j\Theta^\prime\phi-\left(\breve{l}_2+j\breve{l}_3 \right)e^{j\Theta}\widehat\phi\right].
\end{equation}
\end{theorem}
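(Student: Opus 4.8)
The plan is to deduce \eqref{4.18} from the single spinor equation \eqref{new} of Theorem~\ref{thmnew} by re-expressing the general curvatures $l_1$ and $l_2+jl_3$ in terms of the adapted (Bishop-type) data $\Theta$, $\breve l_2$, $\breve l_3$. Two substitutions are needed, and both are essentially algebraic, so no serious difficulty is expected.

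First I would recall that in the Bishop-type construction of Section~\ref{original1} the angle function $\Theta$ is, by hypothesis, the one satisfying \eqref{l1}, namely $\Theta^\prime(s)=l_1(s)$. This directly converts the term $\tfrac12\, j l_1\phi$ in \eqref{new} into $\tfrac12\, j\Theta^\prime\phi$.

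Second, I would invert the relation \eqref{bishopagore}. The hyperbolic rotation matrix appearing there has determinant $\cosh^2\Theta-\sinh^2\Theta=1$, so its inverse is obtained by flipping the signs of the off-diagonal entries; this gives $l_2=\cosh\Theta\,\breve l_2+\sinh\Theta\,\breve l_3$ and $l_3=\sinh\Theta\,\breve l_2+\cosh\Theta\,\breve l_3$. Forming $l_2+jl_3$ and grouping the $\breve l_2$ and $\breve l_3$ terms yields $\breve l_2(\cosh\Theta+j\sinh\Theta)+\breve l_3(\sinh\Theta+j\cosh\Theta)$; using $j^2=1$ to write $\sinh\Theta+j\cosh\Theta=j(\cosh\Theta+j\sinh\Theta)$ and then the hyperbolic Euler formula $e^{j\Theta}=\cosh\Theta+j\sinh\Theta$, this collapses to $(\breve l_2+j\breve l_3)e^{j\Theta}$. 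Substituting both replacements into \eqref{new} gives exactly \eqref{4.18}.

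Alternatively, the same identity follows by differentiating the spinor relation \eqref{3.12}, $\phi=\pm e^{j\Theta/2}\Upsilon$, using the Bishop-type spinor equation $d\Upsilon/ds=-\tfrac12(\breve l_2+j\breve l_3)\widehat\Upsilon$ together with \eqref{4.17}, $\widehat\phi=\pm e^{-j\Theta/2}\widehat\Upsilon$; the product of the two $\pm e^{j\Theta/2}$ factors reproduces the $e^{j\Theta}$ in front of $\widehat\phi$, while the $\Theta^\prime$ term comes from differentiating $e^{j\Theta/2}$. The only point that needs attention in either route is keeping track of the sign pattern of the hyperbolic rotation matrix when inverting \eqref{bishopagore} (equivalently, recognizing $\sinh\Theta+j\cosh\Theta$ as $je^{j\Theta}$), after which everything is a short computation.
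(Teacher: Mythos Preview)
Your primary argument is correct and is essentially the paper's own proof: start from \eqref{new}, replace $l_1$ by $\Theta'$ via \eqref{l1}, invert \eqref{bishopagore} to write $l_2+jl_3=(\breve l_2+j\breve l_3)e^{j\Theta}$, and substitute. The alternative route via differentiating \eqref{3.12} is a valid extra observation, but the paper does not use it.
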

\begin{proof}
From the equations \eqref{new}, \eqref{l1} and \eqref{bishopagore}, we have:
\begin{align}
     \cfrac{d\phi}{ds}&=\cfrac{1}{2}\left[jl_1\phi-\left(l_2+jl_3\right)\widehat\phi\right]\nonumber\\
     &=\cfrac{1}{2}\left[j\Theta^\prime\phi-\left(\left( \cosh{\Theta}\breve{l}_2+\sinh{\Theta}\breve{l}_3  \right)+j\left( \sinh{\Theta}\breve{l}_2+\cosh{\Theta}\breve{l}_3  \right)\right)\widehat\phi \right]\nonumber\\
    & =\cfrac{1}{2}\left[j\Theta^\prime\phi-\left(\breve{l}_2+j\breve{l}_3 \right)e^{j\Theta}\widehat\phi\right]\nonumber.
\end{align}
\end{proof}

\begin{corollary}  Let $\left(\gamma, \nu_1,\nu_2 \right), \left(\gamma, \breve\nu_1,\breve\nu_2 \right): I\rightarrow\mathbb{R}_1^3\times\Delta_1$ be spacelike framed curves and the hyperbolic spinors $\phi$ and $\Upsilon$ represent the triads $\{\nu_1,\nu_2, \mu\}$ and $\{\breve\nu_1,\breve\nu_2,\mu   \}$, respectively. The angle between $\cfrac{d\phi}{ds}$ and $\widehat{\phi}$ is $\Theta$ on condition that $\Theta^\prime=0$.
\end{corollary}

\begin{theorem} Let $\left(\gamma, \nu_1,\nu_2 \right), \left(\gamma, \breve\nu_1,\breve\nu_2 \right): I\rightarrow\mathbb{R}_1^3\times\Delta_1$ be spacelike framed curves and the hyperbolic spinors $\phi$ and $\Upsilon$ represent the triads $\{\nu_1,\nu_2, \mu\}$ and $\{\breve\nu_1,\breve\nu_2,\mu   \}$, respectively. Then the following equation is satisfied:
\begin{equation*}
    \cfrac{d\phi}{ds}=\pm\cfrac{1}{2}e^{j\frac{\Theta}{2}}\left[j\Theta^\prime\Upsilon-\left(\breve{l}_2+j\breve{l}_3 \right)\widehat\Upsilon\right].
\end{equation*}
\end{theorem}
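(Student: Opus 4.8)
The plan is simply to combine the derivative formula \eqref{4.18} with the two conversion relations \eqref{3.12} and \eqref{4.17}. Recall that \eqref{4.18} already expresses the derivative of $\phi$ in terms of the \emph{adapted} curvatures, but still with $\phi$ and $\widehat\phi$ appearing on the right-hand side:
\begin{equation*}
\frac{d\phi}{ds}=\frac12\left[j\Theta^\prime\phi-\left(\breve l_2+j\breve l_3\right)e^{j\Theta}\widehat\phi\right].
\end{equation*}
So the only work remaining is to eliminate $\phi$ and $\widehat\phi$ in favour of $\Upsilon$ and $\widehat\Upsilon$.

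First I would substitute $\phi=\pm e^{j\frac{\Theta}{2}}\Upsilon$ from \eqref{3.12} and $\widehat\phi=\pm e^{-j\frac{\Theta}{2}}\widehat\Upsilon$ from \eqref{4.17}, keeping the \emph{same} choice of sign in both (this is legitimate because the second relation is obtained from the first by applying the mate operation together with \eqref{prop3}). This yields
\begin{equation*}
\frac{d\phi}{ds}=\frac12\left[j\Theta^\prime\left(\pm e^{j\frac{\Theta}{2}}\Upsilon\right)-\left(\breve l_2+j\breve l_3\right)e^{j\Theta}\left(\pm e^{-j\frac{\Theta}{2}}\widehat\Upsilon\right)\right].
\end{equation*}
Since $\mathbb{H}$ is a commutative ring and the hyperbolic exponentials obey $e^{j\Theta}e^{-j\frac{\Theta}{2}}=e^{j\frac{\Theta}{2}}$ (because $j^2=1$), both terms carry the common factor $\pm\frac12 e^{j\frac{\Theta}{2}}$, and pulling it out gives exactly
\begin{equation*}
\frac{d\phi}{ds}=\pm\frac12 e^{j\frac{\Theta}{2}}\left[j\Theta^\prime\Upsilon-\left(\breve l_2+j\breve l_3\right)\widehat\Upsilon\right],
\end{equation*}
which is the assertion.

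There is no genuine obstacle here; the proof is a one-line substitution. The only points that require a little care are (i) the coherence of the ambiguous sign $\pm$ across \eqref{3.12} and \eqref{4.17} — they must be read on the same branch, which is guaranteed by how \eqref{4.17} was derived — and (ii) the observation that the hyperbolic exponential factors multiply like ordinary exponentials and commute with the scalar curvatures, so that the factoring step is valid. As an optional sanity check one could specialize to $\Theta^\prime=l_1$ (see \eqref{l1}), where $q=-\Theta^\prime+l_1=0$, and verify that the formula collapses to the Bishop-type spinor equation for $\Upsilon$ multiplied by the gauge factor $e^{j\Theta/2}$.
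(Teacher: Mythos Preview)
Your proof is correct and follows exactly the same approach as the paper, which simply invokes equations \eqref{3.12}, \eqref{4.17} and \eqref{4.18} to obtain the result. Your write-up in fact supplies more detail than the paper's one-line reference, including the justification for why the signs in \eqref{3.12} and \eqref{4.17} agree and why the exponential factors combine correctly.
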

\begin{proof}
By using the equation \eqref{3.12}, \eqref{4.17} and \eqref{4.18}, we have the desired result.
\end{proof}

\begin{corollary}
 Let $\left(\gamma, \nu_1,\nu_2 \right), \left(\gamma, \breve\nu_1,\breve\nu_2 \right): I\rightarrow\mathbb{R}_1^3\times\Delta_1$ be spacelike framed curves and the hyperbolic spinors $\phi$ and $\Upsilon$ represent the triads $\{\nu_1,\nu_2, \mu\}$ and $\{\breve\nu_1,\breve\nu_2,\mu   \}$, respectively. The angle between $\cfrac{d\phi}{ds}$ and $\widehat{\Upsilon}$ is $\Theta/2$ on condition that $\Theta^\prime=0$.
\end{corollary}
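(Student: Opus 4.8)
The plan is to read the statement off from the theorem that immediately precedes it, together with the hyperbolic Euler formula and the notion of angle between hyperbolic spinors already fixed in this section.

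First I would take the identity just proved,
\[
\frac{d\phi}{ds}=\pm\frac{1}{2}e^{j\frac{\Theta}{2}}\left[j\Theta^\prime\Upsilon-\left(\breve{l}_2+j\breve{l}_3\right)\widehat\Upsilon\right],
\]
and impose the hypothesis $\Theta^\prime=0$. The term $j\Theta^\prime\Upsilon$ drops out, so the right-hand side collapses to
\[
\frac{d\phi}{ds}=\mp\frac{1}{2}\left(\breve{l}_2+j\breve{l}_3\right)e^{j\frac{\Theta}{2}}\,\widehat\Upsilon ,
\]
where I have used commutativity of hyperbolic numbers to pull the scalar factor $\mp\tfrac12(\breve{l}_2+j\breve{l}_3)$ past $e^{j\Theta/2}$. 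This exhibits $d\phi/ds$ as the spinor $\widehat\Upsilon$ acted on by the hyperbolic rotation $e^{j\Theta/2}=\cosh(\Theta/2)+j\sinh(\Theta/2)$ followed by a scalar dilation.

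Next I would invoke the convention used earlier in the section — the same one under which $\phi=\pm e^{j\Theta/2}\Upsilon$ is read as ``the angle between $\phi$ and $\Upsilon$ is $\Theta/2$'', and under which the hyperbolic scalar prefactor appearing in the analogous corollary for $d\phi/ds$ and $\widehat\phi$ is regarded as a pure dilation that does not affect the measured angle. With that reading, the entire rotational content relating $d\phi/ds$ to $\widehat\Upsilon$ is carried by the factor $e^{j\Theta/2}$, so the angle between $d\phi/ds$ and $\widehat\Upsilon$ equals $\Theta/2$, as claimed.

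Since this is essentially a one-line specialization of the preceding theorem, there is no real technical obstacle; the only point I would be careful to state explicitly is the final step, namely that — consistently with the preceding corollaries — the curvature prefactor $\breve{l}_2+j\breve{l}_3$ is treated as a scaling and is not folded into the hyperbolic angle. Granting that, setting $\Theta^\prime=0$ in the previous theorem completes the proof.
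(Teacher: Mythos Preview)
Your proposal is correct and is exactly the intended reading: the paper states this corollary without proof, as an immediate specialization of the preceding theorem obtained by setting $\Theta'=0$ and interpreting the factor $e^{j\Theta/2}$ as the hyperbolic rotation between $d\phi/ds$ and $\widehat\Upsilon$, just as in the earlier corollaries. Your remark that the prefactor $\breve l_2+j\breve l_3$ is treated as a scaling (not contributing to the angle) is the only subtlety, and it is consistent with the convention already used in the analogous corollary for $d\phi/ds$ and $\widehat\phi$.
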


\begin{theorem} Let $\left(\gamma, \nu_1,\nu_2 \right): I\rightarrow\mathbb{R}_1^3\times\Delta_1$ be a spacelike framed curves and the hyperbolic spinor $\phi$ represents the triad $\{\nu_1,\nu_2, \mu\}$.
The derivative of the spinor $\phi$ can be written by using the curvatures with respect to the adapted spacelike framed frame (according to the Frenet-Serret type frame):
\begin{equation}\label{spadapted2}
    \cfrac{d\phi}{ds}=\cfrac{1}{2}\left[j\left(  q+\Theta^\prime \right)\phi+pe^{j\Theta}\widehat\phi\right].
\end{equation}
\end{theorem}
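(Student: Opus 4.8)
The plan is to obtain \eqref{spadapted2} as the Frenet--Serret-type companion of \eqref{4.18}, by a direct substitution into the single spinor equation \eqref{new} of Theorem \ref{thmnew}; no fresh spinor algebra is needed. Indeed, \eqref{new} already expresses $d\phi/ds$ through the general-frame curvatures $l_1,l_2,l_3$, and since $\phi$ here is still the spinor attached to the triad $\{\nu_1,\nu_2,\mu\}$, that identity applies verbatim. The entire content of the theorem is therefore to rewrite $l_1,l_2,l_3$ in terms of the Frenet--Serret-type adapted data $(p,q,\Theta)$ constructed in Section \ref{original1}.

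First I would recall from \eqref{qq} that, for this adapted frame, the auxiliary function $\Theta$ is the one for which $l_2=p\cosh\Theta$ and $l_3=p\sinh\Theta$; together with the hyperbolic Euler formula $e^{j\Theta}=\cosh\Theta+j\sinh\Theta$ this gives
\begin{equation*}
l_2+jl_3=p\bigl(\cosh\Theta+j\sinh\Theta\bigr)=p\,e^{j\Theta}.
\end{equation*}
Second, from \eqref{qqq} one has $q=-\Theta^\prime+l_1$, i.e.\ $l_1=q+\Theta^\prime$. Substituting these two relations into \eqref{new} and simplifying then yields exactly \eqref{spadapted2}. This is the same mechanism used to pass from \eqref{new} to \eqref{4.18} in the Bishop type case, with \eqref{qq}--\eqref{qqq} playing the role that \eqref{l1} and \eqref{bishopagore} played there.

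I do not expect any genuine obstacle: once Theorem \ref{thmnew} is in hand the argument is essentially a one-line rewriting, so the ``hard part'' is really just keeping track of which normalization of $\Theta$ is in force. In the Bishop type frame $\Theta$ is pinned down by $\Theta^\prime=l_1$, which makes the torsion-type curvature collapse; here instead $\Theta$ is pinned down by $l_2\sinh\Theta=l_3\cosh\Theta$, so $q=-\Theta^\prime+l_1$ survives as an independent quantity and must reappear in the final expression. If one additionally wanted the statement phrased through the adapted spinor $\Upsilon$, one would combine the result with $\phi=\pm e^{j\Theta/2}\Upsilon$ and $\widehat\phi=\pm e^{-j\Theta/2}\widehat\Upsilon$ from \eqref{3.12} and \eqref{4.17}, precisely as in the theorem stated just before this one.
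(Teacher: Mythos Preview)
Your approach is correct and essentially identical to the paper's own proof: start from \eqref{new}, use \eqref{qq} to write $l_2+jl_3=p\,e^{j\Theta}$ and \eqref{qqq} to write $l_1=q+\Theta'$, then substitute. Note, however, that this substitution actually produces a \emph{minus} sign in front of $pe^{j\Theta}\widehat\phi$ (as the paper's own computation shows), so the ``$+$'' in the displayed statement \eqref{spadapted2} is a typo; your claim that the substitution yields ``exactly \eqref{spadapted2}'' should be read modulo that sign.
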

\begin{proof}
From the equations \eqref{new}, \eqref{qq}, \eqref{qqq}, we have:
\begin{align*}
     \cfrac{d\phi}{ds}&=\cfrac{1}{2}\left[jl_1\phi-\left(l_2+jl_3\right)\widehat\phi\right]\\
     &=\cfrac{1}{2}\left[j\left(q+\Theta^\prime \right)\phi-\left(p\cosh\Theta+jp\sinh\Theta\right)\widehat\phi\right]\\
   % & =\cfrac{1}{2}\left[j\left( q+\Theta^\prime \right)\phi-p\left(\cosh\Theta+j\sinh\Theta\right)\widehat\phi\right]\\
    & =\cfrac{1}{2}\left[j\left(  q+\Theta^\prime \right)\phi-pe^{j\Theta}\widehat\phi\right].
\end{align*}
\end{proof}

\begin{corollary}  Let $\left(\gamma, \nu_1,\nu_2 \right), \left(\gamma, \breve\nu_1,\breve\nu_2 \right): I\rightarrow\mathbb{R}_1^3\times\Delta_1$ be spacelike framed curves and the hyperbolic spinors $\phi$ and $\Upsilon$ represent the triads $\{\nu_1,\nu_2, \mu\}$ and $\{\breve\nu_1,\breve\nu_2,\mu   \}$, respectively. The angle between $\cfrac{d\phi}{ds}$ and $\widehat{\phi}$ is $\Theta$ on condition that $\Theta^\prime=0$.
\end{corollary}

\begin{theorem} Let $\left(\gamma, \breve\nu_1,\breve\nu_2 \right): I\rightarrow\mathbb{R}_1^3\times\Delta_1$ be a spacelike framed curves and the hyperbolic spinor $\Upsilon$ represents the triad $\{\breve\nu_1,\breve\nu_2, \mu\}$.
The derivative of the spinor $\Upsilon$ can be written by using the curvatures with respect to the adapted spacelike framed frame (according to the Bishop type frame):
\begin{equation*}
    \cfrac{d\Upsilon}{ds}=-\cfrac{1}{2}\left(l_2\cosh\Theta-l_3\sinh\Theta  +j\left(-l_2\sinh\Theta+l_3\cosh\Theta\right)\right)\widehat\Upsilon.
\end{equation*}
\end{theorem}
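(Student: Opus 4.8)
The plan is to read the claimed identity off the Bishop-type single spinor equation that has already been recorded in the first item of the enumerated list following Definition \ref{definition4.2}, namely $\frac{d\Upsilon}{ds}=-\frac{1}{2}\left(\breve l_2+j\breve l_3\right)\widehat\Upsilon$, combined with the curvature transformation \eqref{bishopagore}. From \eqref{bishopagore} one has $\breve l_2=l_2\cosh\Theta-l_3\sinh\Theta$ and $\breve l_3=-l_2\sinh\Theta+l_3\cosh\Theta$, so that $\breve l_2+j\breve l_3=\left(l_2\cosh\Theta-l_3\sinh\Theta\right)+j\left(-l_2\sinh\Theta+l_3\cosh\Theta\right)$; substituting this into the Bishop spinor equation and keeping the overall factor $-\frac{1}{2}\widehat\Upsilon$ untouched gives precisely the asserted formula. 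This is the short route and is essentially a one-line verification.

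For completeness I would instead present the derivation from first principles so the statement does not rely on the (unlabeled) display in the enumerated list. First I would differentiate \eqref{eqq1}, $\breve\nu_1+j\breve\nu_2=\Upsilon^t\sigma\Upsilon$, with respect to $s$, obtaining $\breve\nu_1^\prime+j\breve\nu_2^\prime=\left(\frac{d\Upsilon}{ds}\right)^t\sigma\Upsilon+\Upsilon^t\sigma\frac{d\Upsilon}{ds}$. Then, exactly as in the proof of Theorem \ref{thmnew}, I would expand $\frac{d\Upsilon}{ds}=\xi\Upsilon+\chi\widehat\Upsilon$ in the basis $\{\Upsilon,\widehat\Upsilon\}$ and use \eqref{prop2}, \eqref{eqq1} and \eqref{eqq2} to rewrite the right-hand side as $2\xi\left(\breve\nu_1+j\breve\nu_2\right)-2\chi\mu$. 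The Bishop-type Frenet formulas \eqref{framed2} give $\breve\nu_1^\prime=\breve l_2\mu$ and $\breve\nu_2^\prime=\breve l_3\mu$, so the left-hand side is $\left(\breve l_2+j\breve l_3\right)\mu$; matching the $\left(\breve\nu_1+j\breve\nu_2\right)$- and $\mu$-components, as in \eqref{j}, forces $\xi=0$ and $\chi=-\frac{1}{2}\left(\breve l_2+j\breve l_3\right)$. Plugging \eqref{bishopagore} into $\chi$ then yields the statement.

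The computation is entirely routine, so there is no real obstacle; the only place that needs genuine care is the bookkeeping of signs and of the $j$-coefficient. Because $j^{2}=1$ rather than $-1$, one must be attentive to which terms survive when separating into the real and hyperbolic parts, and because the transformation matrix in \eqref{AAA}–\eqref{bishopagore} carries $-\sinh\Theta$ in the off-diagonal entries (not $+\sinh\Theta$), the signs in $\breve l_2+j\breve l_3$ must be tracked exactly as written above. Once one confirms $\breve l_2+j\breve l_3=\left(l_2\cosh\Theta-l_3\sinh\Theta\right)+j\left(-l_2\sinh\Theta+l_3\cosh\Theta\right)$, the proof is complete.
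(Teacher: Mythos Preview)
Your proposal is correct. The paper actually states this theorem without proof, treating it as an immediate consequence of the Bishop-type spinor equation $\frac{d\Upsilon}{ds}=-\frac{1}{2}(\breve l_2+j\breve l_3)\widehat\Upsilon$ together with the curvature transformation \eqref{bishopagore}; your short route is precisely this implicit argument, and your longer first-principles derivation (mirroring Theorem \ref{thmnew}) is a valid alternative that the paper does not spell out.
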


\begin{theorem} Let $\left(\gamma, \breve\nu_1,\breve\nu_2 \right): I\rightarrow\mathbb{R}_1^3\times\Delta_1$ be a spacelike framed curves and the hyperbolic spinor $\Upsilon$ represents the triad $\{\breve\nu_1,\breve\nu_2, \mu\}$.
The derivative of the spinor $\Upsilon$ can be written by using the curvatures with respect to the adapted spacelike framed frame (according to the Frenet-Serret type frame):
\begin{equation*}
    \cfrac{d\Upsilon}{ds}=\cfrac{1}{2}\left(j\left(-\Theta^\prime+l_1\right)\Upsilon-||\mu^\prime||\widehat\Upsilon\right).
\end{equation*}
\end{theorem}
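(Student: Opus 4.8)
The plan is to reproduce, in structure, the proof of Theorem~\ref{thmnew}, replacing the general frame $\{\nu_1,\nu_2,\mu\}$ together with its Frenet-type system \eqref{framed} by the adapted frame $\{\breve\nu_1,\breve\nu_2,\mu\}$ together with the Frenet--Serret-type system \eqref{framed3}. Since $\{\Upsilon,\widehat\Upsilon\}$ is a basis for the module of hyperbolic spinors, I would first expand, exactly as in \eqref{baz2},
\begin{equation*}
\frac{d\Upsilon}{ds}=\xi\,\Upsilon+\chi\,\widehat\Upsilon
\end{equation*}
for suitable hyperbolic-valued functions $\xi$ and $\chi$; the whole proof then reduces to identifying $\xi$ and $\chi$.

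Next I would differentiate the defining relation $\breve\nu_1+j\breve\nu_2=\Upsilon^t\sigma\Upsilon$ of Definition~\ref{definition4.2} with respect to $s$. On the left-hand side the Frenet--Serret-type formulas \eqref{framed3} give $\breve\nu_1'=p\mu+q\breve\nu_2$ and $\breve\nu_2'=q\breve\nu_1$, so, using $j^2=1$, one obtains $\breve\nu_1'+j\breve\nu_2'=jq\left(\breve\nu_1+j\breve\nu_2\right)+p\,\mu$. On the right-hand side, inserting the expansion of $d\Upsilon/ds$ and applying the symmetry identity \eqref{prop2} (which yields $\Upsilon^t\sigma\widehat\Upsilon=\widehat\Upsilon^t\sigma\Upsilon$) collapses the four terms into $2\xi\,\Upsilon^t\sigma\Upsilon+2\chi\,\widehat\Upsilon^t\sigma\Upsilon$, and then \eqref{eqq1}--\eqref{eqq2} turn this into $2\xi\left(\breve\nu_1+j\breve\nu_2\right)-2\chi\,\mu$.

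Comparing the two expressions and using that $\breve\nu_1+j\breve\nu_2$ and $\mu$ appear $\mathbb{H}$-independently --- which is the uniqueness inherent in the spinor/frame correspondence recalled in the Preliminaries, and is verified by splitting the relation into its real and $j$-parts against the real basis $\{\breve\nu_1,\breve\nu_2,\mu\}$ --- I would read off $2\xi=jq$ and $-2\chi=p$, i.e.\ $\xi=\tfrac{1}{2}jq$ and $\chi=-\tfrac{1}{2}p$. Substituting back gives $d\Upsilon/ds=\tfrac12\left(jq\,\Upsilon-p\,\widehat\Upsilon\right)$, and replacing $q=-\Theta'+l_1$ and $p=||\mu'||$ as in \eqref{qqq} produces the asserted formula. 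The only step that needs genuine care is this coefficient-matching together with the hyperbolic arithmetic $j^2=1$ used on the left-hand side; everything else is the same bookkeeping already carried out in Theorem~\ref{thmnew}, and in fact the statement is nothing but the Frenet--Serret-type spinor equation for $\Upsilon$ listed above with $q$ and $p$ written out explicitly.
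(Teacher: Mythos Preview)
Your proposal is correct and follows exactly the approach the paper intends. The paper does not give an explicit proof of this theorem; it has already recorded the spinor equation $\dfrac{d\Upsilon}{ds}=\dfrac{1}{2}\left(jq\Upsilon-p\widehat\Upsilon\right)$ immediately after Definition~\ref{definition4.2} (obtained by the same argument as Theorem~\ref{thmnew} applied to the adapted frame \eqref{framed3}), and the present statement is just this identity with $q=-\Theta'+l_1$ and $p=\|\mu'\|$ from \eqref{qqq} inserted, which is precisely what you do.
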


\begin{theorem} Let $\left(\gamma, \nu_1,\nu_2 \right), \left(\gamma, \breve\nu_1,\breve\nu_2 \right): I\rightarrow\mathbb{R}_1^3\times\Delta_1$ be spacelike framed curves and the hyperbolic spinors $\phi$ and $\Upsilon$ represent the triads $\{\nu_1,\nu_2, \mu\}$ and $\{\breve\nu_1,\breve\nu_2,\mu   \}$, respectively. Then the following equation is satisfied:
\begin{equation*}
    \cfrac{d\phi}{ds}=\pm\cfrac{1}{2}e^{j\frac{\Theta}{2}}\left[j\left( q+\Theta^\prime \right)\Upsilon-p\widehat\Upsilon\right].
\end{equation*}
\end{theorem}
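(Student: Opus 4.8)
The plan is to derive this identity purely by substitution, combining the spinor derivative formula already obtained for the Frenet--Serret-type adapted frame with the change-of-spinor relations proved earlier. Concretely, I would start from the formula
\begin{equation*}
   \cfrac{d\phi}{ds}=\cfrac{1}{2}\left[j\left(q+\Theta^\prime\right)\phi+pe^{j\Theta}\widehat\phi\right],
\end{equation*}
which is exactly \eqref{spadapted2}, and then replace $\phi$ and $\widehat\phi$ by their expressions in terms of $\Upsilon$ and $\widehat\Upsilon$.

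First I would recall \eqref{3.12}, namely $\phi=\pm e^{j\frac{\Theta}{2}}\Upsilon$, and \eqref{4.17}, namely $\widehat\phi=\pm e^{-j\frac{\Theta}{2}}\widehat\Upsilon$, both of which hold with the \emph{same} choice of sign since $\widehat\phi$ is determined by $\phi$ through the mate operation together with property \eqref{prop3}. Substituting these into the displayed formula gives
\begin{equation*}
   \cfrac{d\phi}{ds}=\pm\cfrac{1}{2}\left[j\left(q+\Theta^\prime\right)e^{j\frac{\Theta}{2}}\Upsilon+p\,e^{j\Theta}e^{-j\frac{\Theta}{2}}\widehat\Upsilon\right].
\end{equation*}

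Then I would simplify the hyperbolic exponential factors using the Euler-type rule $e^{j\Theta_1}e^{j\Theta_2}=e^{j(\Theta_1+\Theta_2)}$ for hyperbolic numbers: the second term carries $e^{j\Theta}e^{-j\frac{\Theta}{2}}=e^{j\frac{\Theta}{2}}$, so $e^{j\frac{\Theta}{2}}$ factors out of both terms, yielding
\begin{equation*}
   \cfrac{d\phi}{ds}=\pm\cfrac{1}{2}e^{j\frac{\Theta}{2}}\left[j\left(q+\Theta^\prime\right)\Upsilon-p\widehat\Upsilon\right],
\end{equation*}
which is the claimed equality (the sign of the $p\widehat\Upsilon$ term matching the convention used in the proof of \eqref{spadapted2}). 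The only point requiring care — hardly an obstacle, but the one place an error could creep in — is bookkeeping of the $\pm$ ambiguity: one must invoke the two-to-one correspondence between hyperbolic spinors and ordered orthonormal triads to ensure the single global sign is consistent across $\phi$, $\widehat\phi$, and hence $d\phi/ds$, rather than treating the two $\pm$'s as independent. With that observed, the verification is immediate and no nontrivial computation remains.
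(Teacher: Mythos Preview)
Your proposal is correct and follows exactly the paper's approach: the paper's proof consists of the single sentence ``By using the equation \eqref{3.12}, \eqref{4.17} and \eqref{spadapted2}, we have the desired result,'' and you have simply spelled out that substitution in detail. Your remark about the sign of the $p\widehat\Upsilon$ term is well taken, since the displayed equation \eqref{spadapted2} and the last line of its proof in the paper differ by a sign; using the version from the proof yields the stated formula.
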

\begin{proof}
By using the equation \eqref{3.12}, \eqref{4.17} and \eqref{spadapted2}, we have the desired result.
\end{proof}

\subsection{Hyperbolic spinor representations of timelike framed curves}

\quad

In this section, we obtain the hyperbolic spinors representations of timelike framed curves in the three-dimensional Minkowski space $\mathbb{R}_1^3$.

One should note that in this part of this study, we do not give the proofs for the sake of brevity, since all proofs can be shown using the methods used in the previous subpart.

\begin{definition} Let
$\left(\gamma,  {\nu}_1 ,  {\nu}_2  \right): I\rightarrow \mathbb{R}_1^3\times\Delta_2$ be a timelike framed curve and the hyperbolic spinor $\psi$ corresponds to the triad $\{  {\mu},\nu_1, {\nu_2}\}$. Then, the hyperbolic spinor representations of the timelike framed curve are written as follows:
\begin{align*}
   \mu +j\nu_1&=\psi^t\sigma\psi,
\\
\nu_2&=-\widehat{\psi}^t\sigma\psi,
\end{align*}
where $\overline{\psi}^t\psi=1$.
\end{definition}

\begin{theorem}\label{thmnew3}
Let $\left(\gamma, \nu_1,\nu_2 \right): I\rightarrow\mathbb{R}_1^3\times\Delta_1$ be a timelike framed curve and the hyperbolic spinor $\psi$ corresponds to the triad $\{\mu,\nu_1, \nu_2\}$. The single spinor equation that includes the curvatures of timelike framed curve is written as:
\begin{equation*}\label{newtimelike}
    \cfrac{d\psi}{ds}=\cfrac{1}{2}\left[jl_2\psi-\left(l_3+jl_1\right)\widehat\psi\right].
\end{equation*}
\end{theorem}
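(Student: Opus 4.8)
The plan is to mirror the argument used in the proof of Theorem \ref{thmnew}, adapting only the bookkeeping to the timelike Frenet-type formulas \eqref{framedtimelike} and to the reordered triad $\{\mu,\nu_1,\nu_2\}$. First I would differentiate the defining relation $\mu+j\nu_1=\psi^t\sigma\psi$ with respect to $s$, obtaining
\begin{equation*}
    \cfrac{d\mu}{ds}+j\cfrac{d\nu_1}{ds}=\left(\cfrac{d\psi}{ds}\right)^t\sigma\psi+\psi^t\sigma\cfrac{d\psi}{ds}.
\end{equation*}
Since $\{\psi,\widehat\psi\}$ is a basis for the two-dimensional hyperbolic spinor space, I would write $\dfrac{d\psi}{ds}=\xi\psi+\chi\widehat\psi$ for hyperbolic-valued functions $\xi,\chi$, exactly as in \eqref{baz2}.

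Next I would substitute the timelike Frenet-type equations \eqref{framedtimelike}, namely $\mu'=l_2\nu_1+l_3\nu_2$ and $\nu_1'=l_2\mu+l_1\nu_2$, into the left-hand side, so that
\begin{equation*}
    \mu'+j\nu_1'=l_2\nu_1+l_3\nu_2+j\left(l_2\mu+l_1\nu_2\right).
\end{equation*}
On the right-hand side I would use \eqref{prop2} to collapse the two bilinear terms, obtaining $2\xi\,\psi^t\sigma\psi+2\chi\,\widehat\psi^t\sigma\psi$, and then invoke the defining relations $\psi^t\sigma\psi=\mu+j\nu_1$ and $-\widehat\psi^t\sigma\psi=\nu_2$ to rewrite this as $2\xi(\mu+j\nu_1)-2\chi\,\nu_2$. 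The key algebraic step is to reorganize the left-hand side into the same shape: collecting the $\mu$ and $\nu_1$ pieces one finds $l_2\nu_1+jl_2\mu=jl_2(\mu+j\nu_1)$ (using $j^2=1$), while the remaining terms $l_3\nu_2+jl_1\nu_2=(l_3+jl_1)\nu_2$ form the $\nu_2$-component. Matching coefficients of the independent vectors $\mu+j\nu_1$ and $\nu_2$ then yields $\xi=\dfrac{jl_2}{2}$ and $\chi=-\dfrac{l_3+jl_1}{2}$, and substituting back into $\dfrac{d\psi}{ds}=\xi\psi+\chi\widehat\psi$ gives the claimed formula.

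The only place that requires genuine care — the main obstacle — is verifying that the sign conventions in the timelike Frenet-type matrix \eqref{framedtimelike} (which differs from the spacelike one \eqref{framed} both in the placement of the $\delta(s)$ factor and in the sign of the $(3,2)$ entry) feed through the identity $j^2=1$ to produce exactly the combination $jl_2\psi-(l_3+jl_1)\widehat\psi$ rather than some variant with different signs. I would double-check this by writing out the real and hyperbolic parts of $\mu'+j\nu_1'$ explicitly and confirming they reassemble into $jl_2(\mu+j\nu_1)+(l_3+jl_1)\nu_2$; once that matches, the coefficient comparison is forced because $\{\mu+j\nu_1,\nu_2\}$ are linearly independent over $\mathbb{H}$ (equivalently, $\{\mu,\nu_1,\nu_2\}$ is a basis of $\mathbb{R}_1^3$). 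No new ideas beyond those in Theorem \ref{thmnew} are needed.
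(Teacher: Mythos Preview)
Your proposal is correct and follows exactly the approach the paper intends: the authors explicitly omit the proof of Theorem \ref{thmnew3}, stating that ``all proofs can be shown using the methods used in the previous subpart,'' i.e., by mimicking the proof of Theorem \ref{thmnew}. Your careful tracking of the timelike Frenet-type formulas \eqref{framedtimelike} and the regrouping via $j^2=1$ to obtain $\xi=\tfrac{jl_2}{2}$ and $\chi=-\tfrac{l_3+jl_1}{2}$ is precisely what is required.
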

Similar to the previous section, we do not give hyperbolic spinor representations of the adapted frame of the timelike framed curve because of the fact that all theorems and corollaries in this part with respect to the hyperbolic spinor representations of the adapted frame of the timelike framed curve can be constructed easily. However, we want to present the following definition:
\begin{definition} Let
$\left(\gamma,  \breve{\nu}_1 ,  \breve{\nu}_2  \right): I\rightarrow \mathbb{R}_1^3\times\Delta_2$ be a timelike framed curve and the hyperbolic spinor $\Upsilon$ corresponds to the triad $\{{\mu}, \breve\nu_1, \breve{\nu}_2\}$. Then, the hyperbolic spinor representations of the adapted frame along the timelike framed curve are given as follows:
\begin{align*}
  \mu+j\breve\nu_1&=\eta^t\sigma\eta,
\\
\breve\nu_2&=-\widehat{\eta}^t\sigma\eta,
\end{align*}
where $\overline{\eta}^t\eta=1$.
\end{definition}

In addition to these, the following single spinor equations with respect to the adapted frame along the curve timelike framed curve can be given:
\begin{enumerate}
    \item 
    Let $\left(\gamma, \breve\nu_1,\breve\nu_2 \right): I\rightarrow\mathbb{R}_1^3\times\Delta_2$ be a timelike framed curve and the hyperbolic spinor $\eta$ represents the triad $\{{\mu}, \breve\nu_1, \breve{\nu}_2\}$. Then, we get (according to the Bishop type frame): 
    \begin{equation}
    \cfrac{d\eta}{ds}=\cfrac{1}{2}\left(  j\breve l_2\eta-\breve l_3\widehat\eta \right).
    \end{equation}
    \item
    Let $\left(\gamma, \breve\nu_1,\breve\nu_2 \right): I\rightarrow\mathbb{R}_1^3\times\Delta_2$ be a timelike framed curve and the hyperbolic spinor $\eta$ represents the triad $\{{\mu}, \breve\nu_1, \breve{\nu}_2\}$. Then, we have (according to the Frenet-Serret type frame): 
    \begin{equation}\label{spacelikeadapted}
    \cfrac{d\eta}{ds}=\cfrac{j}{2}\left(p\eta-q\widehat\eta \right).
\end{equation}
\end{enumerate}

\begin{theorem}\label{thmnew2timelike} Let $\left(\gamma, \nu_1,\nu_2 \right): I\rightarrow\mathbb{R}_1^3\times\Delta_2$ be a timelike framed curve and the hyperbolic spinor $\psi$ corresponds to the triad $\{\mu,\nu_1, \nu_2\}$. The hyperbolic spinor representations of timelike framed vectors are given as:
\begin{align*}
 {\mu}&=\cfrac{1}{2}\left(\psi^t\sigma\psi-{\widehat{\psi}^t\sigma\widehat{\psi}}\right),\\
  \nu_1 &=\cfrac{j}{2}\left(\psi^t\sigma\psi+\widehat{\psi}^t\sigma\widehat\psi\right), \\
      {\nu_2}&=-\widehat{\psi}^t\sigma\psi.
\end{align*}
\end{theorem}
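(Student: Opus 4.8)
The plan is to mirror the argument of Theorem~\ref{thmnew2}, transported to the timelike setting in which the hyperbolic spinor $\psi$ now encodes the ordered triad $\{\mu,\nu_1,\nu_2\}$ rather than $\{\nu_1,\nu_2,\mu\}$. By the defining relations for a timelike framed curve we already have $\mu+j\nu_1=\psi^t\sigma\psi$ and $\nu_2=-\widehat\psi^t\sigma\psi$; hence the third formula in the statement is nothing more than the definition and requires no argument. For the other two, I would first observe that $\mu$ and $\nu_1$ have real components, so from $\mu+j\nu_1=\psi^t\sigma\psi$ one reads off $\mu=Re(\psi^t\sigma\psi)$ and $\nu_1=Im(\psi^t\sigma\psi)$.

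The next step is to apply the elementary hyperbolic-number identities $Re(\rho)=\frac{1}{2}(\rho+\overline\rho)$ and $Im(\rho)=\frac{j}{2}(\rho-\overline\rho)$, valid for every $\rho\in\mathbb{H}$, componentwise to $\rho=\psi^t\sigma\psi$. This yields $\mu=\frac{1}{2}\bigl(\psi^t\sigma\psi+\overline{\psi^t\sigma\psi}\bigr)$ and $\nu_1=\frac{j}{2}\bigl(\psi^t\sigma\psi-\overline{\psi^t\sigma\psi}\bigr)$.

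The key step is then the conjugation property \eqref{prop1}, namely $\overline{\eta^t\sigma\phi}=-\widehat\eta^t\sigma\widehat\phi$, specialized to $\eta=\phi=\psi$, which gives $\overline{\psi^t\sigma\psi}=-\widehat\psi^t\sigma\widehat\psi$. Substituting this into the two expressions above turns them into exactly $\mu=\frac{1}{2}\bigl(\psi^t\sigma\psi-\widehat\psi^t\sigma\widehat\psi\bigr)$ and $\nu_1=\frac{j}{2}\bigl(\psi^t\sigma\psi+\widehat\psi^t\sigma\widehat\psi\bigr)$, which together with $\nu_2=-\widehat\psi^t\sigma\psi$ is the claimed set of representations.

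I do not expect a genuine obstacle here, since the whole argument is linear-algebraic and uses only the already-established structural properties of hyperbolic spinors together with the Pauli-type matrices \eqref{matrices}. The one point needing care — and the only real difference from the proof of Theorem~\ref{thmnew2} — is keeping track of the cyclic position of $\mu$ inside the ordered triad, so that the real-part/imaginary-part splitting is attached to $\mu$ and $\nu_1$ (not to $\nu_1$ and $\nu_2$) and the signs in \eqref{matrices} are carried correctly; once that is fixed the computation is essentially identical to the spacelike case.
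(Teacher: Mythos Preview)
Your proposal is correct and is exactly the approach the paper intends: it explicitly omits the proof in the timelike subsection, stating that ``all proofs can be shown using the methods used in the previous subpart,'' i.e., the argument of Theorem~\ref{thmnew2} with the triad reordered to $\{\mu,\nu_1,\nu_2\}$. Your identification of $\mu=Re(\psi^t\sigma\psi)$, $\nu_1=Im(\psi^t\sigma\psi)$ followed by the hyperbolic real/imaginary-part identities and the conjugation property \eqref{prop1} reproduces that argument verbatim.
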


\begin{corollary}
   Let $\left(\gamma, \nu_1,\nu_2 \right): I\rightarrow\mathbb{R}_1^3\times\Delta_2$ be a timelike framed curve and the hyperbolic spinor $\psi$ corresponds to the triad $\{ \mu,\nu_1,\nu_2\}$. Then the hyperbolic spinor components for timelike framed vectors are given as:
    \begin{align*}
      \mu  =& \cfrac{1}{2}\left(\psi^2_1 -\psi^2_2+\overline\psi^2_1-\overline\psi^2_2, j\left(\psi^2_1+\psi^2_2-\overline\psi^2_1-\overline\psi^2_2   \right), -2\left(\psi_1\psi_2+\overline\psi_1\overline\psi_2\right) \right),
\\
            \nu_1 =& \cfrac{j}{2}\left(\psi^2_1 -\psi^2_2-\overline\psi^2_1+\overline\psi^2_2, j\left(\psi^2_1+\psi^2_2+\overline\psi^2_1+\overline\psi^2_2   \right), 2\left(\overline\psi_1\overline\psi_2-\psi_1\psi_2\right) \right),
             \vspace{+2mm}\\
             \nu_2=& \left(\psi_1\overline\psi_2 +\overline\psi_1\psi_2,j\left(\psi_1\overline\psi_2-\overline\psi_1\psi_2\right),|\psi_1|^2-|\psi_2|^2   \right).
    \end{align*}
\end{corollary}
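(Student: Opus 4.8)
The plan is to carry the argument from the proof of Corollary~\ref{corollary4.1} over essentially verbatim, with the spacelike spinor $\phi$ replaced by the timelike spinor $\psi$ and with the vector identities taken from Theorem~\ref{thmnew2timelike} instead of from its spacelike analogue. Thus the starting point is the trio $\mu=\tfrac12\!\left(\psi^t\sigma\psi-\widehat\psi^t\sigma\widehat\psi\right)$, $\nu_1=\tfrac{j}{2}\!\left(\psi^t\sigma\psi+\widehat\psi^t\sigma\widehat\psi\right)$ and $\nu_2=-\widehat\psi^t\sigma\psi$, which is exactly the content of Theorem~\ref{thmnew2timelike}; the corollary then follows by expanding each of these three expressions into its Cartesian components.

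The single computational step is the componentwise evaluation of the three hyperbolic-valued quadratic forms against the matrices in \eqref{matrices}. Writing $\psi=(\psi_1,\psi_2)^t$ and recalling $\widehat\psi=(-\overline\psi_2,\overline\psi_1)^t$, direct multiplication gives $\psi^t\sigma_1\psi=\psi_1^2-\psi_2^2$, $\psi^t\sigma_2\psi=j(\psi_1^2+\psi_2^2)$, $\psi^t\sigma_3\psi=-2\psi_1\psi_2$, together with the corresponding formulas for $\widehat\psi^t\sigma\widehat\psi$ and $\widehat\psi^t\sigma\psi$; these are precisely the timelike analogues of the computations \eqref{1}, \eqref{2} and \eqref{3}. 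Substituting these nine scalars into the three identities above and collecting terms produces the stated component expressions for $\mu$, $\nu_1$ and $\nu_2$. In particular the overall minus sign in $\nu_2=-\widehat\psi^t\sigma\psi$ turns $\widehat\psi^t\sigma_3\psi=|\psi_2|^2-|\psi_1|^2$ into the listed third component $|\psi_1|^2-|\psi_2|^2$.

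I do not anticipate any genuine obstacle: the only point requiring attention is careful bookkeeping of the conjugations and of the hyperbolic unit $j$ (using $j^2=1$) when expanding the $\sigma_2$-terms, so that the leading factor $j$ in the formula for $\nu_1$ — which produces the nested $j(\cdots)$ in its second component — and the factor $j$ in $\nu_2$ come out correctly. Everything else is a mechanical repetition of the computation already carried out in the spacelike case, so the proof is short.
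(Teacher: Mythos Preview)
Your proposal is correct and matches the paper's approach exactly: the paper omits the proof of this corollary, stating explicitly that all proofs in the timelike subsection ``can be shown using the methods used in the previous subpart,'' i.e., by repeating the computation of Corollary~\ref{corollary4.1} with $\phi$ replaced by $\psi$ and the vector identities taken from Theorem~\ref{thmnew2timelike}. Your write-up is precisely this mechanical transcription.
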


\begin{theorem} Let $\left(\gamma, \nu_1,\nu_2 \right): I\rightarrow\mathbb{R}_1^3\times\Delta_2$ be a timelike framed curve and the hyperbolic spinor $\psi$ corresponds to the triad $\{\mu,\nu_1, \nu_2\}$.
The derivative of the spinor $\psi$ can be written by using the curvatures with respect to the adapted frame of the timelike framed curve (according to the Bishop type frame):
\begin{equation*}
    \cfrac{d\psi}{ds}=\cfrac{1}{2}\left[j\left(\cos\Theta\breve l_2+\sin\Theta \breve l_3  \right)\psi-\left(\left(-\sin\Theta \breve l_2+\cos\theta \breve l_3\right)+j\Theta^\prime\right)\widehat\psi \right].
\end{equation*}
\end{theorem}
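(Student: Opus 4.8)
The plan is to derive the formula from the single hyperbolic spinor equation for timelike framed curves already obtained in Theorem~\ref{thmnew3},
\begin{equation*}
\frac{d\psi}{ds}=\frac{1}{2}\left[jl_2\psi-\left(l_3+jl_1\right)\widehat\psi\right],
\end{equation*}
by re-expressing the general-frame curvatures $l_1,l_2,l_3$ in terms of the Bishop-type adapted-frame curvatures $\breve l_2,\breve l_3$ and the angle function $\Theta$.

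First I would recall the change-of-frame data from the construction of the Bishop-type adapted frame for timelike framed curves in Section~\ref{original1}: the smooth function $\Theta$ is chosen so that $\Theta^\prime=l_1$, and the adapted curvatures satisfy
\begin{equation*}
\begin{pmatrix}\breve l_2\\ \breve l_3\end{pmatrix}=\begin{pmatrix}\cos\Theta & -\sin\Theta\\ \sin\Theta & \cos\Theta\end{pmatrix}\begin{pmatrix}l_2\\ l_3\end{pmatrix}.
\end{equation*}
Inverting this ordinary (not hyperbolic) rotation gives $l_2=\cos\Theta\,\breve l_2+\sin\Theta\,\breve l_3$ and $l_3=-\sin\Theta\,\breve l_2+\cos\Theta\,\breve l_3$, together with $l_1=\Theta^\prime$.

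Then I would substitute these three identities directly into the equation of Theorem~\ref{thmnew3}. The $\psi$-coefficient becomes $jl_2=j\bigl(\cos\Theta\,\breve l_2+\sin\Theta\,\breve l_3\bigr)$, while the $\widehat\psi$-coefficient becomes $-(l_3+jl_1)=-\bigl((-\sin\Theta\,\breve l_2+\cos\Theta\,\breve l_3)+j\Theta^\prime\bigr)$; pulling out the common factor $\tfrac12$ reproduces the asserted formula verbatim.

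I do not expect a genuine obstacle: once the frame-change relations of Section~\ref{original1} are invoked, the proof is essentially a one-step algebraic substitution. The only points demanding care are using the Bishop-type normalization $\Theta^\prime=l_1$ (rather than the Frenet--Serret-type normalization used elsewhere in the section) and inverting the rotation matrix with the correct signs, so that the $\breve l_2$ contribution to $l_3$ carries the minus sign; getting either of these wrong would flip a sign in the final formula.
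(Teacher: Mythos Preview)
Your proposal is correct and follows exactly the approach the paper intends: the paper omits the proof here, stating that ``all proofs can be shown using the methods used in the previous subpart,'' and the analogous spacelike result (equation~\eqref{4.18}) is proved precisely by substituting the change-of-frame relations $l_1=\Theta'$ and the rotation between $(l_2,l_3)$ and $(\breve l_2,\breve l_3)$ into the single spinor equation, just as you do.
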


\begin{theorem} Let $\left(\gamma, \nu_1,\nu_2 \right): I\rightarrow\mathbb{R}_1^3\times\Delta_2$ be a timelike framed curve and the hyperbolic spinor $\psi$ corresponds to the triad $\{\mu,\nu_1, \nu_2\}$.
The derivative of the spinor $\psi$ can be written by using the curvatures with respect to the adapted frame of the timelike framed curve (according to the Frenet-Serret type frame):
\begin{equation*}
    \cfrac{d\psi}{ds}=\cfrac{1}{2}\left[jp\cos\Theta\psi-\left(-p\sin\Theta+j\left(q+\Theta^\prime\right)\right)\widehat\psi\right].
\end{equation*}
\end{theorem}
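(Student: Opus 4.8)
The plan is to obtain the formula by a direct substitution into the single spinor equation for the general timelike framed frame established in Theorem~\ref{thmnew3}, namely
\[
\frac{d\psi}{ds}=\frac{1}{2}\left[jl_2\psi-\left(l_3+jl_1\right)\widehat\psi\right],
\]
replacing the general curvature functions $(l_1,l_2,l_3)$ by their expressions in terms of the Frenet--Serret type adapted curvatures $(p,q)$ of the timelike framed curve. In other words, everything is already contained in Theorem~\ref{thmnew3}; what remains is to feed it the bookkeeping between the two sets of curvatures coming from the adapted-frame construction of Section~\ref{original1}.

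First I would recall from the derivation of \eqref{framedtimelike3} that, once the smooth function $\Theta$ is chosen so that $l_2\sin\Theta=-l_3\cos\Theta$, one has $l_2=p\cos\Theta$ and $l_3=-p\sin\Theta$ with $p=\|\mu'\|\neq0$, together with $q=-\Theta'+l_1$, i.e.\ $l_1=q+\Theta'$. These are precisely the timelike analogues of \eqref{qq}--\eqref{qqq} that were used to pass from \eqref{framedtimelike} to \eqref{framedtimelike3}. Substituting them into the equation above, the coefficient of $\psi$ becomes $jl_2=jp\cos\Theta$, while the coefficient of $\widehat\psi$ becomes $-(l_3+jl_1)=-\left(-p\sin\Theta+j(q+\Theta')\right)$, and collecting the two terms gives
\[
\frac{d\psi}{ds}=\frac{1}{2}\left[jp\cos\Theta\,\psi-\left(-p\sin\Theta+j\left(q+\Theta'\right)\right)\widehat\psi\right],
\]
which is the asserted identity.

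There is no genuine difficulty in this argument; the only point that warrants care is keeping the timelike sign conventions straight — in particular $l_3=-p\sin\Theta$ with the minus sign, and $l_1=q+\Theta'$ rather than $q-\Theta'$ — since these differ from the spacelike expressions \eqref{qq}--\eqref{qqq}. Once those substitutions are pinned down, the conclusion is immediate from Theorem~\ref{thmnew3}, with no further computation required.
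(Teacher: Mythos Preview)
Your proposal is correct and follows exactly the approach the paper intends: the paper omits the proof here, referring back to the spacelike case, and the analogous spacelike argument (the proof of \eqref{spadapted2}) proceeds precisely by substituting the adapted-frame relations \eqref{qq}--\eqref{qqq} into the single spinor equation \eqref{new}, just as you substitute $l_2=p\cos\Theta$, $l_3=-p\sin\Theta$, $l_1=q+\Theta'$ into Theorem~\ref{thmnew3}. Your care with the timelike sign conventions is appropriate and the computation is correct.
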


\begin{theorem} Let $\left(\gamma, \breve\nu_1,\breve\nu_2 \right): I\rightarrow\mathbb{R}_1^3\times\Delta_2$ be a timelike framed curve and the hyperbolic spinor $\eta$ corresponds to the triad $\{\mu,\breve\nu_1, \breve\nu_2\}$.
The derivative of the spinor $\eta$ (according to the Bishop type frame) can be expressed by using the curvatures with respect to the general frame of the timelike framed curve as follows:
\begin{equation*}
    \cfrac{d\eta}{ds}=\cfrac{1}{2}\left[j\left(l_2\cos\Theta -l_3\sin\Theta \right)\eta-\left(l_2\sin\Theta+l_3\cos\Theta\right)\widehat\eta\right].
\end{equation*}
\end{theorem}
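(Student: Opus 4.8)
The plan is to read this identity off from the Bishop-type single spinor equation for the adapted timelike frame, combined with the (circular) change of curvature functions recorded in Section~\ref{original1}. First I would note that, since $\eta$ represents the Bishop-type adapted triad $\{\mu,\breve\nu_1,\breve\nu_2\}$ of the timelike framed curve, the first of the two single spinor equations listed for that frame applies, namely
\begin{equation*}
\cfrac{d\eta}{ds}=\cfrac{1}{2}\left(j\breve l_2\,\eta-\breve l_3\,\widehat\eta\right).
\end{equation*}
If a self-contained derivation of this step is wanted, I would obtain it exactly as in the proof of Theorem~\ref{thmnew}: expand $\frac{d\eta}{ds}=\xi\eta+\chi\widehat\eta$ in the spinor basis $\{\eta,\widehat\eta\}$, differentiate $\mu+j\breve\nu_1=\eta^t\sigma\eta$, and use \eqref{prop2}, \eqref{prop1} together with the Bishop-type Frenet formulas \eqref{framed2timelike} to pin down $\xi=\frac{1}{2}j\breve l_2$ and $\chi=-\frac{1}{2}\breve l_3$.

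The second and final step is to substitute the relation between the adapted curvatures $(\breve l_2,\breve l_3)$ and the general curvatures $(l_2,l_3)$. From the construction of the adapted frame for timelike framed curves one has
\begin{equation*}
\begin{pmatrix}\breve l_2\\ \breve l_3\end{pmatrix}
=\begin{pmatrix}\cos\Theta & -\sin\Theta\\ \sin\Theta & \cos\Theta\end{pmatrix}
\begin{pmatrix}l_2\\ l_3\end{pmatrix},
\end{equation*}
that is, $\breve l_2=l_2\cos\Theta-l_3\sin\Theta$ and $\breve l_3=l_2\sin\Theta+l_3\cos\Theta$. Inserting these into the spinor equation above yields
\begin{equation*}
\cfrac{d\eta}{ds}=\cfrac{1}{2}\left[j\left(l_2\cos\Theta-l_3\sin\Theta\right)\eta-\left(l_2\sin\Theta+l_3\cos\Theta\right)\widehat\eta\right],
\end{equation*}
which is precisely the claimed formula.

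Since every ingredient is already in hand, there is no genuine obstacle; the only places where care is needed are bookkeeping ones. One must remember that the timelike frame is rotated from $\{\nu_1,\nu_2\}$ to $\{\breve\nu_1,\breve\nu_2\}$ by an ordinary rotation with entries $\cos\Theta,\sin\Theta$ (in contrast to the hyperbolic rotation \eqref{AAA} of the spacelike case), one must respect the sign pattern of the timelike Frenet system \eqref{framedtimelike} as opposed to the spacelike one \eqref{framed}, and one must make sure that $\Theta$ here denotes the angle used to pass to the Bishop-type triad and not the auxiliary angle appearing in the Frenet-Serret-type reduction. Alternatively, one could bypass the Bishop-type spinor equation entirely and run the argument of Theorem~\ref{thmnew} directly with $(l_1,l_2,l_3)$ replaced by $(0,\breve l_2,\breve l_3)$, and then apply the same substitution; either route produces the identity in two short lines.
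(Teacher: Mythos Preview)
Your proposal is correct and follows exactly the approach the paper intends: the paper omits this proof explicitly, stating that all timelike results are obtained by the same methods as the spacelike subpart, and your two-step argument (apply the Bishop-type single spinor equation $\tfrac{d\eta}{ds}=\tfrac{1}{2}(j\breve l_2\eta-\breve l_3\widehat\eta)$, then substitute the circular rotation relating $(\breve l_2,\breve l_3)$ to $(l_2,l_3)$) is precisely that method. Your bookkeeping remarks about using $\cos\Theta,\sin\Theta$ rather than the hyperbolic functions of \eqref{AAA} are also on point.
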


\begin{theorem} Let $\left(\gamma, \breve\nu_1,\breve\nu_2 \right): I\rightarrow\mathbb{R}_1^3\times\Delta_2$ be a timelike framed curve and the hyperbolic spinor $\eta$ corresponds to the triad $\{\mu,\breve\nu_1, \breve\nu_2\}$.
The derivative of the spinor $\eta$ (according to the Frenet-Serret-type) can be given by using the curvatures with respect to the general frame of the timelike framed curve as follows:
\begin{equation*}
    \cfrac{d\eta}{ds}=\cfrac{j}{2}\left(||\mu^\prime||\eta-\left(-\Theta^\prime+l_1\right)\widehat\eta\right).
\end{equation*}
\end{theorem}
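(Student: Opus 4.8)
The plan is to reduce the statement to the Frenet--Serret-type spinor equation already obtained for the adapted frame of the timelike framed curve, namely \eqref{spacelikeadapted}, and then simply rewrite the adapted curvatures $p$ and $q$ in terms of the data of the general frame. Thus essentially no new computation is required; the content of the theorem is a change of notation in a formula established earlier, so the proof will be very short.

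First I would recall, from the construction of the adapted frame for timelike framed curves in Section~\ref{original1}, that the adapted curvature functions are exactly $p=||\mu^\prime||\neq 0$ and $q=-\Theta^\prime+l_1$, where $\Theta:I\to\mathbb{R}$ is the smooth rotation angle singled out by $l_2=p\cos\Theta$ and $l_3=-p\sin\Theta$. Since the hyperbolic spinor $\eta$ represents the triad $\{\mu,\breve\nu_1,\breve\nu_2\}$, equation \eqref{spacelikeadapted} gives $d\eta/ds=\frac{j}{2}\left(p\eta-q\widehat\eta\right)$. Substituting the two identities for $p$ and $q$ yields
\begin{equation*}
\cfrac{d\eta}{ds}=\cfrac{j}{2}\left(||\mu^\prime||\,\eta-\left(-\Theta^\prime+l_1\right)\widehat\eta\right),
\end{equation*}
which is precisely the asserted formula, completing the proof.

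To keep the argument self-contained, should one prefer not to invoke \eqref{spacelikeadapted} directly, an alternative route is to differentiate $\mu+j\breve\nu_1=\eta^t\sigma\eta$, expand $d\eta/ds=\tilde\xi\,\eta+\tilde\chi\,\widehat\eta$ in the basis $\{\eta,\widehat\eta\}$, and determine $\tilde\xi$ and $\tilde\chi$ by matching coefficients with the help of \eqref{prop2} and \eqref{prop1}, using the Frenet--Serret-type derivative formulas \eqref{framedtimelike3} for $\{\mu,\breve\nu_1,\breve\nu_2\}$; this is verbatim the method of the proof of Theorem~\ref{thmnew3}, with $(l_1,l_2,l_3)$ replaced by the adapted data and then expressed through $||\mu^\prime||$ and $-\Theta^\prime+l_1$. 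In either approach the only genuine obstacle is sign bookkeeping: one must thread the orientation conventions in the timelike Frenet-type formulas \eqref{framedtimelike} and the sign in the definition $q=-\Theta^\prime+l_1$ through the computation consistently, so that the coefficient of $\widehat\eta$ emerges as $-\frac{j}{2}\left(-\Theta^\prime+l_1\right)$ rather than with the opposite sign.
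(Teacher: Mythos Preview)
Your proposal is correct and matches the paper's intended approach: the paper omits the proof of this theorem, stating that all proofs in the timelike subpart ``can be shown using the methods used in the previous subpart,'' and the analogous spacelike result is obtained precisely by substituting the adapted-frame identities $p=\lVert\mu'\rVert$ and $q=-\Theta'+l_1$ into the already-derived single spinor equation. Your substitution of these identities into \eqref{spacelikeadapted} is exactly that method, and the alternative route you sketch (differentiating the defining relation and matching coefficients) is the same computation that produced \eqref{spacelikeadapted} in the first place.
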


\section{Examples}\label{examples}
In this section, we construct two examples with respect to the hyperbolic spinor representations of both spacelike and timelike framed curves.
\begin{example}
Let us take spacelike curve $\gamma$ in $\mathbb{R}_1^3$ which is defined by 
\begin{equation}\label{curve}
    \gamma(s)=\left(\cfrac{s^3}{3},\cfrac{s^4}{4}+\cfrac{s^3}{3}, \cfrac{s^5}{5}+\cfrac{s^3}{3}  \right).
\end{equation}
The singular point of $\gamma(s)$ is $s=0$. Then, we have:
\begin{align*}
   \nu_1(s)&=\cfrac{1}{\sqrt{\left(  s^2+1 \right)^2+\left( s+1 \right)^2}}\left(0, s^2+1, -s-1 \right),\\
      \nu_2(s)&=\cfrac{1}{\sqrt{\left(s^4+3s^2+2s+1 \right)\left(s^4+3s^2+2s+2\right)}}\left(\left( s^2+1 \right)^2+\left(s+1\right)^2, s+1, s^2+1 \right),\\
       \mu(s)&=\cfrac{-1}{\sqrt{s^4+3s^2+2s+1 }}\left(1, s+1, s^2+1 \right),
       \\
       \delta(s)&=1.
\end{align*}
We can easily say that $\left(\gamma,\nu_1,\nu_2\right):I\rightarrow\mathbb{R}_1^3\times\Delta_1$ is a spacelike framed curve.
In the following Figure \ref{fig:1}, we can examine the spacelike framed curve given in the equation \eqref{curve}.
\begin{figure}[h!]
\centering
\includegraphics[width=1.8in]{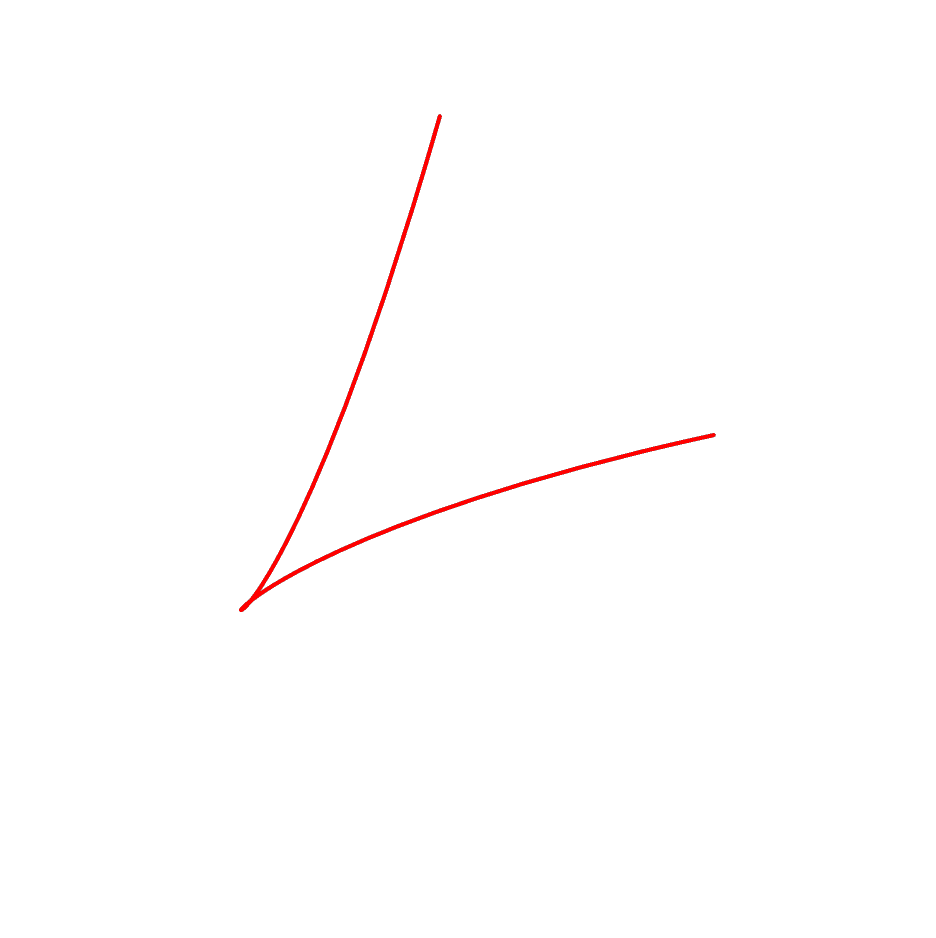} \caption{The spacelike framed curve $\gamma(s)$}
\label{fig:1}
\end{figure}

Then, by straightforward calculations, we have the curvature of the spacelike framed curve as follows:
\begin{align*}
   l_1(s)&=\cfrac{s \left(s^4+2 s^3+2 s^2+6 s-3\right)}{\left(s^4+3 s^2 +2 s+   2\right)^{3/2} \sqrt{s^8+6 s^6+4 s^5+12 s^4+12 s^3+13 s^2+6 s+2}},\\
   l_2(s)&=-\cfrac{s^2+2s-1}{\sqrt{\left(s^4+3 s^2+2 s+1\right)\left(s^4+3 s^2+2 s+2\right)}},\\
   l_3(s)&=\cfrac{2 s^3+3 s+1}{\sqrt{\left(s^4+3 s^2+2 s+1\right) \left(s^8+6 s^6+4 s^5+12 s^4+12 s^3+13 s^2+6 s+2\right)}},\\
   \alpha(s)&=-s^2\sqrt{s^4+3s^2+2s+1}.
\end{align*}
In addition to these, the following hyperbolic spinor components can be given:
\begin{align*}
    \phi_1&=\pm\cfrac{\sqrt{s+1+j\left(\left(s^2+1\right)^2+\left(s+1\right)^2+\left(s^2+1\right)\sqrt{s^4+3s^2+2s+2}\right)}}{\sqrt[4]{4\left(s^4+3s^2+2s+1\right)\left(s^4+3s^2+2s+2   \right)}},\\
     \phi_2&=\pm\cfrac{\sqrt{s+1+j\left(-\left(s^2+1\right)^2-\left(s+1\right)^2+\left(s^2+1\right)\sqrt{s^4+3s^2+2s+2}\right)}}{\sqrt[4]{4\left(s^4+3s^2+2s+1\right)\left(s^4+3s^2+2s+2   \right)}}.
\end{align*}
By using the equation \eqref{new}, we have:
\footnotesize
\begin{equation*} 
\cfrac{d\phi}{ds}=\cfrac{1}{2}\left[\begin{array}{rl}
&j\left(\cfrac{s \left(s^4+2 s^3+2 s^2+6 s-3\right)}{\left(s^4+3 s^2 +2 s+   2\right)^{3/2} \sqrt{s^8+6 s^6+4 s^5+12 s^4+12 s^3+13 s^2+6 s+2}}\right)\phi\\-
   &\left(\begin{array}{rl}&-\cfrac{s^2+2s-1}{\sqrt{\left(s^4+3 s^2+2 s+1\right)\left(s^4+3 s^2+2 s+2\right)}}\\&+j\left(\cfrac{2 s^3+3 s+1}{\sqrt{\left(s^4+3 s^2+2 s+1\right) \left(s^8+6 s^6+4 s^5+12 s^4+12 s^3+13 s^2+6 s+2\right)}} \right)  \end{array} \right)\widehat\phi\end{array}\right].
\end{equation*}
\normalsize
\end{example}

\begin{example}
Let us take timelike curve $\gamma$ in $\mathbb{R}_1^3$ which is defined by 
\begin{equation}\label{curve2}
    \gamma(s)=\left(2\left(s-3   \right)\sinh{s}-2\cosh{s},-2\left(   s-3 \right)\cosh{s}+2\sinh{s},\cfrac{s^2}{2}-3s  \right).
\end{equation}
The singular point of $\gamma(s)$ is $s=0$. Also, we have:
\begin{align*}
   \nu_1(s)&=\left( \sinh s, -\cosh s,0  \right),\\
    \nu_2(s)&=\left( -\cfrac{1}{\sqrt{3}}\cosh s,\cfrac{1}{\sqrt{3}}\sinh s,-\cfrac{2}{{\sqrt{3}}}    \right),\\
    \mu(s)&=\left(  \cfrac{2}{\sqrt{3}}\cosh s,-\cfrac{2}{\sqrt{3}}\sinh s,\cfrac{1}{\sqrt{3}} \right).
\end{align*}
In that case, we can easily say that $\left(\gamma,\nu_1,\nu_2\right):I\rightarrow\mathbb{R}_1^3\times\Delta_2$ is a timelike framed curve.
In the following Figure \ref{fig:2}, we can see the timelike framed curve presented in the equation \eqref{curve2}.
\begin{figure}[h!]
\centering
\includegraphics[width=1.8in]{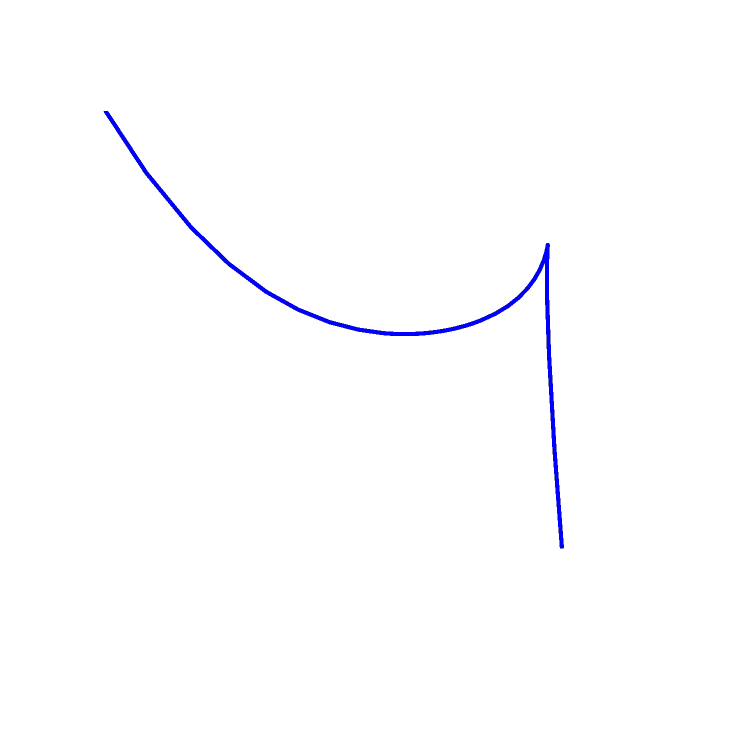} \caption{The timelike framed curve $\gamma(s)$}
\label{fig:2}
\end{figure}
\newpage
\noindent
Then, by straightforward calculations, we have the curvature of the timelike framed curve as follows:
\begin{align*}
   p(s)&=\cfrac{2}{\sqrt{3}},\\
   q(s)&=\cfrac{1}{\sqrt{3}},\\
    \alpha(s)&=\sqrt{3}\left(s-3\right).
\end{align*}
Additionally, the following hyperbolic spinor components can be given:
\begin{align*}
    \eta_1&=\pm\cfrac{1}{\sqrt{2}}\sqrt{\left(-1+\cfrac{2}{\sqrt{3}}\right)e^{-js}},\\
    \eta_2&=\pm\cfrac{1}{\sqrt{2}}\sqrt{\left(-1-\cfrac{2}{\sqrt{3}}\right)e^{js}}.
\end{align*}
With the help of the equation \eqref{spacelikeadapted}, we obtain:
\begin{equation*}
    \cfrac{d\eta}{ds}=\cfrac{j}{2}\left( \cfrac{2}{\sqrt{3}}\eta-\cfrac{1}{\sqrt{3}}\widehat\eta  \right).
\end{equation*}
\end{example}

\section{Conclusions}\label{conclusions}
The main purpose of this study is to investigate the hyperbolic spinor representations of not only spacelike framed curves but also timelike framed curves in Minkowski 3-space. In order to make the theorems and materials we found in this study more comprehensive, and to contribute to the literature, first, adapted frames for spacelike and timelike framed curves were obtained. We found hyperbolic spinor representations for non-null framed curves and some relations using both the existing information in the literature and the newly produced adapted frames. Also, we obtained the hyperbolic spinor equations of vectors of non-null framed curves, and then components of these vectors. We also construct some geometric interpretations and results with respect to these new hyperbolic spinor representations. Then, we construct numerical examples in order to construct given theorems and results with some useful figures.

We strongly believe that this study will make a great contribution to the existing literature for both classical differential geometry, singularity theory and also spinor structure, and we hope that it will shed light on future studies.

We plan that in the future study, we investigate the hyperbolic spinor representations of spacelike and timelike framed curves in de Sitter 2-space in detail.

\section*{Acknowledgement} First,  our deceased co-author Assist. Prof. Dr. Mehmet G\"uner for his contributions to education, science, and all his students. Also, the authors would like to thank Prof. Dr. Murat Tosun for his valuable suggestions and support. Moreover, the authors would like to thank editors and anonymous referees for their invaluable comments and careful reading.

\section*{Declarations}
\bmhead{Funding} Not applicable.
\bmhead{Conflict of interest} Not applicable.
\bmhead{Ethics approval} Not applicable.
\bmhead{Consent to participate} Not applicable.
\bmhead{Consent for publication} Not applicable.
\bmhead{Availability of data and materials} Consent for publication.
\bmhead{Code availability}  Consent for publication.
\bmhead{Authors' contributions} All authors contributed equally.

%\bibliographystyle{...}
%\bibliography{...}

\end{document}